\newtheorem{theorem}{Theorem}[section]
\newtheorem{corollary}[theorem]{Corollary}
\theoremstyle{definition}
\newtheorem{problem}{Problem}
\newcommand\beq{\begin{equation}}
\newcommand\eeq{\end{equation}}
\newcommand\bce{\begin{center}}
\newcommand\ece{\end{center}}
\newcommand\bea{\begin{eqnarray}}
\newcommand\eea{\end{eqnarray}}
\newcommand\bean{\begin{eqnarray*}}
\newcommand\eean{\end{eqnarray*}}
\newcommand\bmt{\begin{multline*}}
\newcommand\emt{\end{multline*}}
\newcommand\ben{\begin{enumerate}}
\newcommand\een{\end{enumerate}}
\newcommand\bit{\begin{itemize}}
\newcommand\eit{\end{itemize}}
\newcommand\brr{\begin{array}}
\newcommand\err{\end{array}}
\newcommand\bt{\begin{tabular}}
\newcommand\et{\end{tabular}}
\newcommand\ms{\medskip}
\newcommand\D{\mathcal D}
\newcommand\V{\mathcal V}
\DeclareMathOperator\val{val}
\DeclareMathOperator\ins{ins}
\DeclareMathOperator\pea{pea}
\DeclareMathOperator\ph{ph}
\DeclareMathOperator\spea{sp}
\DeclareMathOperator\apea{ap}
\DeclareMathOperator\sval{sval}
\renewcommand\t{\mathbf t}
\newcommand\x{\mathbf x}
\newcommand\p{\mathbf p}
\renewcommand\r{\mathbf r}
\newcommand\h{\mathbf h}
\DeclareMathOperator\pw{\mathbf{pea}}
\DeclareMathOperator\sw{\mathbf{sp}}
\DeclareMathOperator\aw{\mathbf{ap}}
\DeclareMathOperator\swsum{spw}
\DeclareMathOperator\svwsum{svw}
\DeclareMathOperator\awsum{apw}
\DeclareMathOperator\phbf{\mathbf{ph}}
\renewcommand\u{\texttt{u}}
\renewcommand\d{\texttt{d}}
\newcommand\W{\mathcal{W}}
\newcommand\M{\mathcal{M}}
\newcommand\bij{\phi}
\def\down{-- ++(1,-1) circle(1.2pt)}
\def\up{-- ++(1,1) circle(1.2pt)}
\title{Symmetric peaks and symmetric valleys in Dyck paths}
\author{Sergi Elizalde\thanks{Department of Mathematics, Dartmouth College, Hanover, NH 03755. \texttt{sergi.elizalde@dartmouth.edu}}}
\date{}
\begin{document}

\maketitle

\begin{abstract}
The notion of symmetric and asymmetric peaks in Dyck paths was introduced by Fl\'orez and Rodr\'{\i}guez, who counted the total number of such peaks over all Dyck paths of a given length.
In this paper we generalize their results by giving multivariate generating functions that keep track of the number of symmetric peaks and the number of asymmetric peaks, as well as the widths of these peaks.  
We recover a formula of Denise and Simion as a special case of our results.
 
 We also consider the analogous but more intricate notion of symmetric valleys. We find a continued fraction expression for the generating function of Dyck paths with respect to the number of symmetric valleys and the sum of their widths, 
which provides an unexpected connection between symmetric valleys and statistics on ordered rooted trees.
 Finally, we enumerate Dyck paths whose peak or valley heights satisfy certain monotonicity and unimodality conditions, using a common framework to recover some known results,
and relating our questions to the enumeration of certain classes of column-convex polyominoes.
\end{abstract}

\section{Introduction}

A {\em Dyck path} of semilength $n$ is a lattice path with steps $\u=(1,1)$ and $\d=(1,-1)$ that starts at $(0,0)$, ends at $(2n,0)$, and never goes below the $x$-axis. Let $\D_n$ be the set of Dyck paths of semilength $n$, and let $\D=\bigcup_{n\ge0}\D_n$ be the set of all Dyck paths.
It is well-known that $|\D_n|=C_n=\frac{1}{n+1}\binom{2n}{n}$, the $n$-th Catalan number. Denoting by $|D|$ the semilength of $D\in\D$, the corresponding generating function is
\begin{equation}\label{eq:C}C(z)=\sum_{D\in\D}z^{|D|}=\sum_{n\ge0}C_nz^n=\frac{1-\sqrt{1-4z}}{2z}.\end{equation}

There is an extensive literature on the enumeration of Dyck paths with respect to the number of occurrences of certain subpaths and other related statistics, see for example~\cite{denise_two_1995,deutsch_dyck_1999,sapounakis_counting_2007,bacher_dyck_2014}. In this paper we explore a different type of statistic that has been recently introduced by 
Fl\'orez and Rodr\'{\i}guez~\cite{florez_enumerating_2020}, namely the number of symmetric and asymmetric peaks, as well as a new related statistic that we call the number of symmetric valleys. We remark that this notion of symmetry is unrelated to the degree of symmetry statistic defined in~\cite{elizalde_measuring_2020}.

A {\em peak} is an occurrence of $\u\d$, and a {\em valley} is an occurrence of $\d\u$. 
Throughout the paper, occurrences of subpaths always refer to subsequences in consecutive positions.
In~\cite{florez_enumerating_2020}, Fl\'orez and Rodr\'{\i}guez introduce the notion of {\em symmetric peaks}, which can be defined as follows. First, note that every peak can be extended to a unique maximal subsequence of the form $\u^i\d^j$ for $i,j\ge1$, which we call the {\em maximal mountain} of the peak. A peak is {\em symmetric} if its maximal mountain $\u^i\d^j$ satisfies $i=j$, and it is {\em asymmetric} otherwise. 
See Figure~\ref{fig:sympeaks} for an example.

\begin{figure}[h]
\centering
    \begin{tikzpicture}[scale=0.55]
     \draw[dotted](-1,0)--(15,0);
     \draw[dotted](0,-1)--(0,4);
     \fill[red!20!white](0,0)--(2,2)--(3,1);
     \fill[green!20!white](3,1)--(5,3)--(7,1);
     \fill[green!20!white](7,1)--(8,2)--(9,1);
      \fill[red!20!white](9,1)--(10,2)--(12,0);
       \fill[green!20!white](12,0)--(13,1)--(14,0);
     \draw[thick](0,0) circle(1.2pt)\up\up\down\up\up\down\down\up\down\up\down\down\up\down;
     \draw[red, thick] (2,2) circle (.2);
     \draw[green, thick] (5,3) circle (.2);
     \draw[green, thick] (8,2) circle (.2);
     \draw[red, thick] (10,2) circle (.2);
     \draw[green, thick] (13,1) circle (.2);
    \end{tikzpicture}
   \caption{A Dyck path with three symmetric peaks and two asymmetric peaks. The symmetric peaks, circled in green, have weights $2,1,1$, whereas both asymmetric peaks, circled in red, have weight $1$. The maximal mountains of each peak are have been filled in. }\label{fig:sympeaks}
\end{figure}
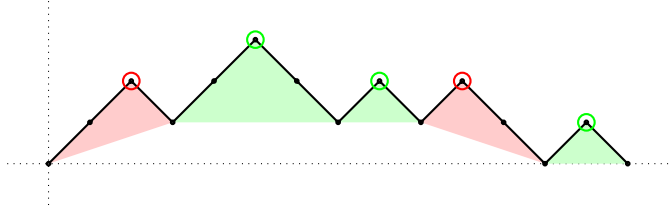

Fl\'orez and Rodr\'{\i}guez~\cite{florez_enumerating_2020}  find a formula for the total number of symmetric peaks over all Dyck paths of semilength $n$, as well as for the total number of asymmetric peaks. 
In~\cite[Sec.\ 2.2]{florez_enumerating_2020}, they pose the more general problem of enumerating Dyck paths of semilength $n$ with a given number of symmetric peaks.
Our first result is a solution to this problem. Specifically, in Section~\ref{sec:peaks} we
obtain a trivariate generating function that enumerates Dyck paths with respect to the number of symmetric peaks and the number of asymmetric peaks. Our method also gives a more direct derivation of the generating function in~\cite[Thm.\ 2.3]{florez_enumerating_2020} for the total number of symmetric peaks.

Fl\'orez and Rodr\'{\i}guez define the {\em weight} of a symmetric peak as the integer $i$ such that the maximal mountain of the peak is $\u^i\d^i$. One can similarly define the weight of an asymmetric peak with maximal mountain $\u^i\d^j$ to be $\min\{i,j\}$.
In~\cite[Cor.\ 3.2]{florez_enumerating_2020}, the authors give a formula for the sum of the weights of all symmetric peaks over all Dyck paths of semilength $n$. We generalize this formula in Section~\ref{sec:peak_weights}, by giving generating functions that enumerate Dyck paths with respect to the number of symmetric peaks and the sum of their weights, and more generally, with respect to the number of symmetric peaks and asymmetric peaks of each possible weight.

Section~\ref{sec:val} deals with the related notion of symmetric valleys, which was originally suggested by Emeric Deutsch~\cite{deustch_personal_nodate}, phrased in terms of pairs of consecutive peaks at the same height.
We say that a valley $\d\u$ is {\em symmetric} if the maximal subsequence of the form $\d^i\u^j$ that contains it satisfies $i=j$; see Figure~\ref{fig:symval} for an example.
The {\em weight} of a symmetric valley is the largest $i$ such that the valley is contained in a subsequence of the form $\d^i\u^i$.
In Section~\ref{sec:valleys} we give a continued fraction expression for the generating function that enumerates Dyck paths with respect to the number of symmetric valleys, and we deduce from it a simple generating function for the total number of symmetric valleys over all Dyck paths of semilength~$n$. In Section~\ref{sec:valley_weights} we incorporate another variable that keeps track of the sum of the weights of the symmetric valleys of a path, and we derive a simple generating function for the total weight of symmetric valleys over all Dyck paths of semilength~$n$. 
For both the total number of symmetric valleys and the total sum of their weights, the resulting formulas can be shown to count other combinatorial structures, providing an intriguing connection between symmetric valleys in Dyck paths and statistics on ordered rooted trees.

\begin{figure}[h]
\centering
    \begin{tikzpicture}[scale=0.55]
     \draw[dotted](-1,0)--(17,0);
     \draw[dotted](0,-1)--(0,4);
     \fill[blue!20!white](7,3)--(9,1)--(11,3);
     \fill[blue!20!white](1,1)--(2,0)--(3,1);
     \draw[thick](0,0) circle(1.2pt)\up\down\up\down\up\up\up\down\down\up\up\down\down\up\down\down;
     \draw[blue, thick] (9,1) circle (.2);
     \draw[blue, thick] (2,0) circle (.2);
    \end{tikzpicture}
   \caption{A Dyck path with two symmetric valleys, circled in blue, having weights $1$ and $2$.}\label{fig:symval}
\end{figure}

Finally, in Section~\ref{sec:unimodal} we use similar ideas to enumerate Dyck paths whose sequence of peak or valley heights satisfies certain monotonicity or unimodality conditions. We recover results by Barcucci et al.~\cite{barcucci_nondecreasing_1997} about so-called non-decreasing Dyck paths, and by Sun and Jia~\cite{sun_counting_2007} and Penaud and Roques~\cite{penaud_generation_2002} about Dyck paths with increasing peak heights. We also obtain new generating functions for Dyck paths whose sequence of peak or valley heights is unimodal, and we show that imposing restrictions on both peak heights and valley heights is sometimes equivalent to the study of certain classes of column-convex polyominoes that have appeared in the literature~\cite{bousquet-melou_generating_1995,bousquet-melou_empilements_1992,flajolet_combinatorial_1980,klarner_asymptotic_1974,stanley_enumerative_2012}.

\section{Symmetric and asymmetric peaks}

\subsection{Counting symmetric and asymmetric peaks}\label{sec:peaks}

For $D\in\D$, let $\pea(D)$, $\spea(D)$, $\apea(D)$ and $\val(D)$ denote the number of peaks, the number of symmetric peaks, the number of asymmetric peaks, and the number of valleys of $D$, respectively. A path in $\D$ is called a {\em pyramid}  if it is of the form $\u^i\d^i$ for some $i\ge1$. 
Our first result gives an expression for the generating function $C_{\spea,\apea}(t,r,z)=\sum_{D\in\D} t^{\spea(D)} r^{\apea(D)} z^{|D|}$.

\begin{theorem}\label{thm:speaapea}
The generating function for Dyck paths with respect to the number of symmetric peaks and the number of asymmetric peaks is
\begin{multline*}C_{\spea,\apea}(t,r,z)\\
=\frac{(1-z)\left(1-tz-(1+t-2r)z^2-\sqrt{(1-z)\left[1-(3+2t)z+(3+4t-4r+t^2)z^2-(1+t-2r)^2 z^3\right]}\right)}{2z(1-(1+t-r)z)^2}.
\end{multline*}
\end{theorem}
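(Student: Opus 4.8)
The plan is to reduce the computation to a weighted lattice-path enumeration via the mountain decomposition, and then solve the resulting functional equation by the kernel method.

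First I would record the basic structural fact: every nonempty $D\in\D$ factors uniquely as a concatenation of its maximal mountains, $D=\u^{a_1}\d^{b_1}\u^{a_2}\d^{b_2}\cdots\u^{a_k}\d^{b_k}$ with all $a_i,b_i\ge1$. Each maximal mountain contains exactly one peak, so $\pea(D)=k$, and mountain $i$ is symmetric precisely when $a_i=b_i$. The lattice-path constraints translate into conditions on the heights $h_i=\sum_{j\le i}(a_j-b_j)$ reached at the successive valleys: one needs $h_i\ge0$ for all $i$, together with $h_k=0$ and $h_0=0$. The crucial observation is that mountain $i$ is symmetric if and only if $h_i=h_{i-1}$, i.e.\ symmetry is equivalent to the height being left unchanged by that mountain.

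Next I would reinterpret $D$ as a weighted walk on the nonnegative integers, where the $i$-th step goes from height $h_{i-1}$ to $h_i$ and carries the combined weight of all mountains realizing that height change ($z$ marking the semilength contribution $a_i$, $t$ a symmetric mountain, $r$ an asymmetric one). Summing geometric series yields three move types: a level step ($h_i=h_{i-1}$) of weight $S=tz/(1-z)$; an up step by $d\ge1$ of weight $Rz^{d}$, where $R=rz/(1-z)$; and a down step by any positive amount, each of the same weight $R$. The independence of the down weight from the size of the drop is exactly what keeps the problem tractable. Writing $P(x)=\sum_{h\ge0}P_h x^h$ with $P_h$ the total weight of such walks from height $0$ to height $h$, the target series is $C_{\spea,\apea}=P_0$, and appending one move at the end gives a linear functional equation $P(x)\,K(x)=1+\frac{R\,P(1)}{1-x}$ for an explicit kernel $K(x)$ with a single catalytic unknown $P(1)$.

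To finish I would clear denominators to get $P(x)=\frac{(1-zx)\bigl[(1-x)+R\,P(1)\bigr]}{N(x)}$, where $N(x)$ is an explicit quadratic in $x$ with constant term and leading coefficient both equal to $Q=1-S+R=(1-(1+t-r)z)/(1-z)$. Since only finitely many walks contribute to each power of $z$, the series $P(x)$ is well defined and can be evaluated at the unique root $X=X(z)$ of $N$ with $X(0)=1$ (the second root blows up as $z\to0$); requiring the numerator to vanish there pins down $P(1)=(X-1)/R$, and then setting $x=0$ gives the clean form $C_{\spea,\apea}=P_0=X/Q$. Substituting $S$ and $R$ and solving the quadratic produces the stated radical, with the factor $(1-(1+t-r)z)^2$ in the denominator arising as $Q^2$. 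The one genuinely nonroutine point is justifying the kernel substitution at a root with $X(0)=1$ rather than a root vanishing at $z=0$; this is legitimate because $P_h$ has $z$-valuation at least $h+1$, so $P(X)$ still converges coefficientwise. The remaining work is the direct expansion verifying that $[1-tz-(1+t-2r)z^2]^2-4z(1-(1+t-r)z)^2$ equals $(1-z)\bigl[1-(3+2t)z+(3+4t-4r+t^2)z^2-(1+t-2r)^2z^3\bigr]$, which matches the radical in the statement.
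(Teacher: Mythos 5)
Your argument is correct, and it reaches the stated formula by a genuinely different route from the paper. The paper never factors the path into maximal mountains: instead it starts from the Narayana generating function $C_{\val}(v,z)$, counts \emph{insertion points} (valley bottoms plus the two endpoints, so $\ins(D)=\val(D)+2$), builds paths with \emph{marked} symmetric peaks by inserting sequences of marked pyramids at these points via the substitution $q=1/(1-upz/(1-z))$, and then removes the marking by the inclusion--exclusion substitution $u=t-1$, finishing with $\apea=\pea-\spea$. Your approach -- maximal-mountain factorization, transfer to a weighted walk on $\mathbb{Z}_{\ge0}$ with level weight $tz/(1-z)$, up-by-$d$ weight $rz^{d+1}/(1-z)$, and down weight $rz/(1-z)$ independent of the drop, followed by the kernel method with catalytic unknown $P(1)$ -- is self-contained and correctly handles the one delicate point, namely that the relevant kernel root satisfies $X(0)=1$ rather than $X(0)=0$, which is legitimate because $P_h$ has $z$-valuation at least $h+1$. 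I verified that your discriminant identity and the evaluation $P_0=X/Q$ do reproduce the theorem. What the paper's method buys is reusability: the same insertion-plus-marking scheme extends with almost no change to peak weights (Theorem~\ref{thm:swaw}) and, with height-restricted insertions, to symmetric valleys (Theorem~\ref{thm:sval}); your method buys a single linear functional equation with no marking step, and it would also refine naturally (e.g.\ to peak weights, by replacing $tz/(1-z)$ and $rz/(1-z)$ with weighted sums). One small slip: the leading coefficient of your quadratic $N(x)$ is $Qz$, not $Q$ (only the constant term equals $Q$); this does not affect your conclusion, since the discriminant $M^2-4Q^2z$ you actually verify is the one coming from the correct leading coefficient.
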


\begin{proof}
To obtain an expression for $C_{\spea,\apea}(t,r,z)$, we will describe how to construct Dyck paths where some of their symmetric peaks have been marked, by inserting pyramids with a marked peak in certain positions of smaller Dyck paths.

First, consider the generating function for Dyck paths with respect to the number of valleys, $C_{\val}(v,z)=\sum_{D\in\D}v^{\val(D)}z^{|D|}$. From the standard first-return decomposition of non-empty Dyck paths as $D=\u D_1\d D_2$, where $D_1,D_2\in\D$, one obtains the equation
\begin{equation}\label{eq:Cval} C_{\val}(v,z)=1+zC_{\val}(v,z)(vC_{\val}(v,z)-v+1), \end{equation}
since $\val(D)=\val(D_1)+\val(D_2)+1$ unless $D_2$ is empty. Thus, 
\begin{equation}\label{eq:Cval2}C_{\val}(v,z)=\frac{1-(1-v)z-\sqrt{1-2(1+v)z+(1-v)^2z^2}}{2vz},
\end{equation}
whose coefficients are the well-known Narayana numbers (see e.g.~\cite{deutsch_dyck_1999}).

Next, let $C_{\pea,\ins}(p,q,z)=\sum_{D\in\D}p^{\pea(D)}q^{\ins(D)}z^{|D|}$, where $\ins(D)$
is the number of vertices of $D$ where the insertion of a pyramid $\u^j\d^j$ would create a symmetric peak. Such vertices, which we call {\em insertion points}, are precisely the bottoms of the valleys of $D$, the initial vertex of $D$, and the final vertex of $D$. Thus,  $\ins(D)=\val(D)+2$ and $\pea(D)=\val(D)+1$ unless $D$ is empty, in which case $\ins(D)=1$ and $\pea(D)=0$. It follows that
\begin{equation} C_{\pea,\ins}(p,q,z)=pq^2(C_{\val}(pq,z)-1)+q=\frac{q\left(1+(1-pq)z-\sqrt{1-2(1+pq)z+(1-pq)^2z^2}\right)}{2z}\label{eq:Cpeains}\end{equation}

We are now ready to enumerate Dyck paths with {\em marked} symmetric peaks. Formally, these are pairs $(D,M)$ where $D\in\D$ and $M$ is a subset of the symmetric peaks of $D$. Let $\D^\ast$ be the set of such pairs $(D,M)$, and let
$C_{\pea}^\ast(p,u,z)=\sum_{(D,M)\in\D^{\ast}} p^{\pea(D)} u^{|M|} z^{|D|}$. 
Dyck paths with marked symmetric peaks can be uniquely obtained from regular Dyck paths by inserting, at each insertion point, a possibly empty sequence of pyramids whose peaks are marked. With variable $u$ keeping track of the number of marked peaks,
replacing each insertion point with a sequence of pyramids with marked peaks corresponds to the substitution $q=\frac{1}{1-upz/(1-z)}$ in $C_{\pea,\ins}(p,q,z)$, and so 
$$C_{\pea}^\ast(p,u,z)=C_{\pea,\ins}\left(p,\frac{1}{1-upz/(1-z)},z\right).$$

Our next goal is to get rid of the marking. We introduce a variable $t$ to keep track of the total number of symmetric peaks, not just the marked ones, and we make the substitution $u=t-1$. This way, if $S(D)$ is the set of symmetric peaks of $D\in D$, then
\begin{equation}\label{eq:incl-excl}\sum_{M\subseteq S(D)} (t-1)^{|M|}=((t-1)+1)^{|S(D)|}=t^{\spea(D)},\end{equation}
and so
$$C_{\pea,\spea}(p,t,z)=\sum_{D\in\D} p^{\pea(P)} t^{\spea(P)} z^{|D|}=\sum_{D\in\D}\sum_{M\subseteq S(D)} p^{\pea(D)} (t-1)^{|M|} z^{|D|}=C_{\pea}^\ast(p,t-1,z).$$
Finally, since $\apea(D)=\pea(D)-\spea(D)$, we have $$C_{\spea,\apea}(t,r,z)=C_{\pea,\spea}(r,t/r,z)=C_{\pea,\ins}\left(r,\frac{1}{1-(t-r)z/(1-z)},z\right).$$ Using Equation~\eqref{eq:Cpeains} we obtain the stated formula.
\end{proof}

From Theorem~\ref{thm:speaapea}, one can easily obtain the generating function the total number of symmetric peaks in Dyck paths, recovering~\cite[Thm.\ 2.3]{florez_enumerating_2020}.

\begin{corollary}\label{cor:totalspea}
The generating functions the total number of symmetric peaks and the total number of asymmetric peaks in Dyck paths are, respectively,
$$\sum_{D\in\D} \spea(D) z^{|D|} = \left.\frac{\partial}{\partial t} C_{\spea,\apea}(t,1,z) \right|_{t=1}=\frac{5z-1+(1-z)\sqrt{1-4z}}{2(1-z)\sqrt{1-4z}}=\frac{1}{2}\left(\frac{5z-1}{(1-z)\sqrt{1-4z}}+1\right),$$
$$\sum_{D\in\D} \apea(D) z^{|D|} = \left.\frac{\partial}{\partial r} C_{\spea,\apea}(1,r,z) \right|_{r=1}=\frac{1-3z-(1-z)\sqrt{1-4z}}{(1-z)\sqrt{1-4z}}=
\frac{1-3z}{(1-z)\sqrt{1-4z}}-1.$$
\end{corollary}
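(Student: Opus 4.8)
The plan is to derive both formulas by differentiating the generating function of Theorem~\ref{thm:speaapea}, using that $\left.\partial_t C_{\spea,\apea}(t,1,z)\right|_{t=1}=\sum_{D\in\D}\spea(D)z^{|D|}$ and $\left.\partial_r C_{\spea,\apea}(1,r,z)\right|_{r=1}=\sum_{D\in\D}\apea(D)z^{|D|}$, which follow immediately from the definition $C_{\spea,\apea}(t,r,z)=\sum_D t^{\spea(D)}r^{\apea(D)}z^{|D|}$. Rather than differentiate the unwieldy closed form directly, I would work with the structured intermediate expressions from the proof, namely $C_{\spea,\apea}(t,r,z)=C_{\pea,\ins}\!\left(r,\tfrac{1}{1-(t-r)z/(1-z)},z\right)$ together with $C_{\pea,\ins}(p,q,z)=pq^2(C_{\val}(pq,z)-1)+q$ from~\eqref{eq:Cpeains}. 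Since at $t=r=1$ the inner substitution gives $q=1$, the chain rule factors cleanly.

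For the symmetric peaks, set $r=1$ and write $q=q(t)=\tfrac{1}{1-(t-1)z/(1-z)}$, so that only $q$ depends on $t$. The chain rule gives $\left.\partial_t C_{\spea,\apea}(t,1,z)\right|_{t=1}=\left.\partial_q C_{\pea,\ins}(1,q,z)\right|_{q=1}\cdot\left.q'(t)\right|_{t=1}$, where one computes $\left.q'(t)\right|_{t=1}=z/(1-z)$. For the first factor I would use $C_{\pea,\ins}(1,q,z)=q^2(C_{\val}(q,z)-1)+q$, whose $q$-derivative at $q=1$ equals $2(C(z)-1)+\left.\partial_v C_{\val}(v,z)\right|_{v=1}+1$, since $C_{\val}(1,z)=C(z)$. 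The remaining ingredient is the total-valley generating function $\left.\partial_v C_{\val}\right|_{v=1}=\sum_D\val(D)z^{|D|}$, most easily obtained by implicit differentiation of the functional equation~\eqref{eq:Cval}: this yields $\left.\partial_v C_{\val}\right|_{v=1}=\dfrac{zC(z)(C(z)-1)}{1-2zC(z)}=\dfrac{zC(z)(C(z)-1)}{\sqrt{1-4z}}$, using $1-2zC(z)=\sqrt{1-4z}$ from~\eqref{eq:C}. Assembling these pieces and simplifying with the abbreviation $s:=\sqrt{1-4z}$ and $C(z)=(1-s)/(2z)$ produces the claimed expression $\tfrac12\bigl(1+\tfrac{5z-1}{(1-z)\sqrt{1-4z}}\bigr)$.

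For the asymmetric peaks I would avoid differentiating in $r$ (where $r$ appears both as the outer argument and inside $q$) by using the identity $\apea(D)=\pea(D)-\spea(D)$, so that $\sum_D\apea(D)z^{|D|}=\sum_D\pea(D)z^{|D|}-\sum_D\spea(D)z^{|D|}$. The total-peak generating function is immediate: since $\pea(D)=\val(D)+1$ for nonempty $D$, it equals $\left.\partial_v C_{\val}\right|_{v=1}+(C(z)-1)$, which simplifies to $\tfrac12\bigl(\tfrac{1}{\sqrt{1-4z}}-1\bigr)$ (the coefficients being $\binom{2n-1}{n-1}$). Subtracting the symmetric generating function just obtained gives the stated formula for asymmetric peaks. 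The only genuine obstacle is bookkeeping: each step is an elementary differentiation, but combining the radical expressions over the common denominator $(1-z)\sqrt{1-4z}$ and repeatedly substituting $s^2=1-4z$ to clear the algebra is where care is needed to land exactly on the closed forms.
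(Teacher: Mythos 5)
Your proposal is correct and follows essentially the same route as the paper, which obtains the corollary by differentiating the generating function of Theorem~\ref{thm:speaapea} at $t=1$ (resp.\ $r=1$); your choice to run the chain rule through the intermediate composition $C_{\pea,\ins}(r,q,z)$ and the valley functional equation~\eqref{eq:Cval}, and to get the asymmetric case from $\apea=\pea-\spea$, is just a sensible way of organizing that same differentiation. All the individual derivatives you record ($q'(1)=z/(1-z)$, $\partial_v C_{\val}|_{v=1}=zC(C-1)/\sqrt{1-4z}$, the total-peak series $\tfrac12(1/\sqrt{1-4z}-1)$) check out against the stated closed forms.
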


An asymptotic analysis of the coefficients of these generating functions also explains the fact, discovered in~\cite[Thm.\ 2.6]{florez_enumerating_2020}, that on the set of Dyck paths of semilength $n$, the proportion of peaks that are symmetric tends to $1/3$ as $n\to\infty$; in other words, Dyck paths have twice as many asymmetric peaks as symmetric peaks on average.

\ms

Another Dyck path statistic that is closely related to the number of symmetric peaks is the number of pairs consecutive valleys at the same height, where the height of a valley is defined to be the $y$-coordinate of its lowest vertex. Let $\spea'(D)$ denote the number of such pairs in $D\in\D$.
This statistic was originally proposed by Emeric Deutsch~\cite{deustch_personal_nodate}.
Note that each pair of consecutive valleys at the same height encloses a symmetric peak. However, the converse is not true: the first and the last peak of the path can be symmetric but they are never enclosed by valleys at the same height, since the endpoints of the path are not valleys. Thus, $\spea'(D)$ and $\spea(D)$ disagree if $D$ starts or ends with a symmetric peak.

It is easy to modify the above argument to obtain a generating function for Dyck paths with respect to this statistic, which we denote by
$C_{\spea'}(t,z)=\sum_{D\in\D} t^{\spea'(D)} z^{|D|}$.

\begin{theorem}\label{thm:spea'}
The generating function for Dyck paths with respect to the number of pairs consecutive valleys at the same height is
$$C_{\spea'}(t,z)=\frac{1+z-tz-\sqrt{(1+z-tz)^2-4z(1-tz)/(1-z)}}{2z}.$$
\end{theorem}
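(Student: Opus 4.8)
The plan is to mimic the proof of Theorem~\ref{thm:speaapea} almost verbatim, changing only which vertices serve as \emph{insertion points}. The key observation is that a pyramid $\u^j\d^j$ inserted at the bottom of a valley becomes a symmetric peak flanked by a valley on each side, hence one that \emph{is} enclosed by two consecutive valleys at the same height; by contrast, a pyramid inserted at the initial or final vertex of the path produces a symmetric peak that is not so enclosed, precisely because the endpoints are not valleys. Consequently, to count $\spea'$ rather than $\spea$ I would take the insertion points of $D$ to be exactly the bottoms of its valleys, discarding the two endpoints.

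First I would record the analogue of \eqref{eq:Cpeains}. Writing $\ins'(D)=\val(D)$ for the number of valley bottoms and using $\pea(D)=\val(D)+1$ for nonempty $D$, one obtains
\[
\widetilde C(p,q,z):=\sum_{D\in\D}p^{\pea(D)}q^{\ins'(D)}z^{|D|}=p\bigl(C_{\val}(pq,z)-1\bigr)+1,
\]
with $C_{\val}$ as in \eqref{eq:Cval2}. Replacing each insertion point by a (possibly empty) sequence of pyramids with marked peaks is, exactly as before, the substitution $q\mapsto \frac{1}{1-upz/(1-z)}$, where $u$ marks the inserted peaks. Applying the inclusion--exclusion identity \eqref{eq:incl-excl} with $u=t-1$ converts the marked count into $t^{\spea'(D)}$, and since $\spea'$ does not involve the total number of peaks I may set $p=1$. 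This gives
\[
C_{\spea'}(t,z)=C_{\val}\!\left(\frac{1}{1-(t-1)z/(1-z)},\,z\right)=C_{\val}\!\left(\frac{1-z}{1-tz},\,z\right).
\]

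It then remains to substitute $v=\frac{1-z}{1-tz}$ into the closed form \eqref{eq:Cval2} and simplify. Dividing numerator and denominator of \eqref{eq:Cval2} by $v$, the denominator becomes $2z$, and a short computation gives $\frac{1-(1-v)z}{v}=\frac{1-z}{v}+z=1+z-tz$. For the radicand, using $1+z-tz=\frac{1-(1-v)z}{v}$ one finds
\[
\frac{1-2(1+v)z+(1-v)^2z^2}{v^2}=(1+z-tz)^2-\frac{4z}{v}=(1+z-tz)^2-\frac{4z(1-tz)}{1-z},
\]
which is exactly the expression under the square root in the statement, yielding the claimed formula.

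The genuinely new point, and the only place where care is needed, is the first paragraph: justifying that the enclosed symmetric peaks are in bijection with sequences of marked pyramids inserted at valley bottoms (with the two endpoints excluded), and checking that collapsing a marked pyramid merges its two flanking steps into a single valley, so that the base path and the insertion data are recovered uniquely. Once the insertion points are correctly identified, the rest is a routine transcription of the earlier argument together with the algebraic simplification above; as a sanity check, $C_{\spea'}(1,z)=C_{\val}(1,z)=C(z)$ holds.
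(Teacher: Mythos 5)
Your proposal is correct and follows essentially the same route as the paper: the paper's proof likewise observes that the insertion points for $\spea'$ are exactly the valley bottoms (dropping the two endpoints used for $\spea$), arrives at $C_{\spea'}(t,z)=C_{\val}\bigl(\tfrac{1}{1-(t-1)z/(1-z)},z\bigr)$, and simplifies via Equation~\eqref{eq:Cval2}; your algebraic verification of the radicand matches. (The paper also sketches a second proof via the decomposition $D=\u D_1\d\u D_2\d\cdots\u D_k\d$, but your primary argument is the one the paper uses.)
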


\begin{proof}
The proof is very similar to that of Theorem~\ref{thm:speaapea}, once we remove the variable that keeps track of the total number of peaks. 
The main difference is that the vertices where the insertion of a pyramid $\u^i\d^i$ creates a pair of consecutive valleys at the same height are simply the bottoms of the valleys of the path. Thus,
$$C_{\spea'}(t,z)=C_{\val}\left(\frac{1}{1-(t-1)z/(1-z)},z\right),$$
and the result follows from Equation~\eqref{eq:Cval2}.

An alternative proof is obtained by decomposing non-empty Dyck paths as $D=\u D_1\d\u D_2\d\dots\u D_k\d$, where $k\ge1$ and $D_i\in\D$ for all $i$. Noting that $\spea'(D)$ equals $\sum_{i=1}^k\spea'(D_i)$ plus the number of indices $i\in\{2,3,\dots,k-1\}$ such that $D_i$ is a pyramid or is empty, we get the equation $$C_{\spea'}(t,z)=1+zC_{\spea'}(t,z)+\frac{z^2C_{\spea'}(t,z)^2}{1-z\left(C_{\spea'}(t,z)+\frac{t-1}{1-z}\right)},$$
from where the expression for $C_{\spea'}(t,z)$ follows as well.
\end{proof}

We deduce from Theorem~\ref{thm:spea'} that the generating function for Dyck paths with no pairs of consecutive valleys at the same height is
$$C_{\spea'}(0,z)=\frac{1-z^2-\sqrt{(1-z)(1-3z-z^2-z^3)}}{2z(1-z)}.$$
Interestingly, the same generating function enumerates Dyck paths that avoid the consecutive subpath $\u\u\d\u$, whose coefficients appear as sequence A105633 in~\cite{sloane_-line_nodate}.  This raises the following question.
\begin{problem} Describe a length-preserving bijection between Dyck paths with no pairs of consecutive valleys at the same height and Dyck paths that avoid $\u\u\d\u$.
\end{problem}
Note the distribution of the statistic $\spea'$ on Dyck paths does not coincide with the distribution of the number of occurrences of $\u\u\d\u$.

\subsection{Weights of symmetric and asymmetric peaks}\label{sec:peak_weights} 

Recall that the we defined the {\em weight} of a peak with maximal mountain $\u^i\d^j$ to equal $\min\{i,j\}$.
Fl\'orez and Rodr\'{\i}guez~\cite[Cor.\ 3.2]{florez_enumerating_2020} compute the sum of the weights of all symmetric peaks over all Dyck paths of a given length. In this section we provide a significant refinement of their result and of our formulas from Section~\ref{sec:peaks}, by obtaining a generating function with infinitely many variables that keep track of the number of symmetric and asymmetric peaks of weight $i$, for every $i\ge1$.

For $D\in\D$ and $i\ge1$, let $\pea_i(D)$ (respectively $\spea_i(D)$, $\apea_i(D)$) denote the number of peaks (respectively symmetric peaks, asymmetric peaks) of weight $i$ in $D$. Define the corresponding multivatiate statistics
$\pw(D)=(\pea_1(D),\pea_2(D),\dots)$, 
$\sw(D)=(\spea_1(D),\spea_2(D),\dots)$ and 
$\aw(D)=(\apea_1(D),\apea_2(D),\dots)$. We introduce infinite families of variables $\p=(p_1,p_2,\dots)$, $\t=(t_1,t_2,\dots)$ and $\r=(r_1,r_2,\dots)$, and use the notation $\p^{\pw(D)}=\prod_{i\ge1}p_i^{\pea_i(D)}$, $\t^{\sw(D)}=\prod_{i\ge1}t_i^{\spea_i(D)}$, $\r^{\aw(D)}=\prod_{i\ge1}r_i^{\apea_i(D)}$.
Our next result is an expression for the generating function
$$C_{\sw,\aw}(\t,\r,z)=\sum_{D\in\D} \t^{\sw(D)} \r^{\aw(D)} z^{|D|}.$$ 

\begin{theorem}\label{thm:swaw}
For $\x=(x_1,x_2,\dots)$, let $P(\x,z)=\sum_{i\ge1} x_iz^i$. The generating function for Dyck paths with respect to the weights of their symmetric and asymmetric peaks is
\begin{multline*}C_{\sw,\aw}(\t,\r,z)\\
=\frac{1+z-(1+z)P(\t,z)+2zP(\r,z)-\sqrt{(1-z)\left[(1-P(\t,z))^2-z(1-P(\t,z)+2P(\r,z))^2\right]}}{2z\left(1-P(\t,z)+P(\r,z)\right)^2}.
\end{multline*}
\end{theorem}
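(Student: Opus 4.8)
My plan is to mirror the strategy used to prove Theorem~\ref{thm:speaapea}: build Dyck paths carrying a marked subset of their symmetric peaks by inserting pyramids at insertion points, and then strip off the marking by inclusion--exclusion. The one genuinely new feature is that now the \emph{weights} of the peaks of the base path matter, since these become the asymmetric peaks (and the unmarked symmetric peaks) of the final path, with their weights unchanged. Because inserting a pyramid at a valley bottom or at an endpoint never alters the maximal mountain of a pre-existing peak, a base peak of weight $i$ stays a peak of weight $i$. Consequently, instead of reducing to $C_{\val}$ as before, I will reduce to the \emph{peak-weight generating function} $C_{\pw}(\p,z)=\sum_{D\in\D}\p^{\pw(D)}z^{|D|}$, which records every peak according to its weight.

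For the reduction I reuse the bijection from the proof of Theorem~\ref{thm:speaapea} between Dyck paths with a marked subset of symmetric peaks and pairs (base path, inserted pyramids at each insertion point). Giving each peak the baseline weight $r_i$ according to its weight $i$, and applying inclusion--exclusion exactly as in~\eqref{eq:incl-excl} but refined weight-by-weight, a marked symmetric peak of weight $j$ then contributes $t_j-r_j$, so a sequence of inserted pyramids at a single insertion point contributes $\tfrac{1}{1-\sum_j(t_j-r_j)z^j}=\tfrac{1}{1-P(\t,z)+P(\r,z)}$. Using $\ins(D)=\pea(D)+1$ as before, the insertion-point refinement of $C_{\pw}$ factors as $C_{\pw,\ins}(\p,q,z)=q\,C_{\pw}(q\p,z)$ (where $q\p$ scales every variable by $q$), and substituting $q=\tfrac{1}{1-P(\t,z)+P(\r,z)}$ yields
\[C_{\sw,\aw}(\t,\r,z)=\frac{1}{1-P(\t,z)+P(\r,z)}\,C_{\pw}\!\left(\frac{\r}{1-P(\t,z)+P(\r,z)},\,z\right),\]
where, since $P$ is linear in its variables, the inner argument has $P$-value $\tfrac{P(\r,z)}{1-P(\t,z)+P(\r,z)}$.

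The main obstacle is to compute $C_{\pw}$ and, in particular, to explain the surprising fact that it depends on $\p$ only through $P(\p,z)=\sum_i p_iz^i$; this is surprising because the weight $\min\{i,j\}$ of a peak is a \emph{non-local} statistic of its maximal mountain $\u^i\d^j$. To handle this I will decompose a path into its maximal mountains $\u^{a_1}\d^{b_1}\cdots\u^{a_k}\d^{b_k}$ and track the valley heights. For $h\ge0$ let $M_h$ be the generating function, weighted by $z$ per up-step and by $\prod p_{\min\{a,b\}}$ over the mountains, for concatenations of mountains running from height $h$ down to height $0$ without going below the axis, so that $C_{\pw}=M_0$. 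Splitting off the first mountain $\u^a\d^b$ and separating the cases $a\ge b$ and $a<b$ gives the linear recursion $M_h=[h=0]+P(\p,z)\sum_{g\ge0}z^gM_{h+g}+P(\p,z)\sum_{j=0}^{h-1}M_j$. Forming $\mathcal M(s)=\sum_{h\ge0}M_hs^h$ turns this into a functional equation whose kernel is $K(s)=s^2-\bigl[(1+z)-(1-z)P(\p,z)\bigr]s+z$; applying the kernel method at the unique small root $s_0=s_0(z)$ forces $s_0=z\,C_{\pw}$ and hence
\[z\,C_{\pw}^2-\bigl[(1+z)-(1-z)P(\p,z)\bigr]C_{\pw}+1=0,\]
which indeed depends on $\p$ only through $P(\p,z)$. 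I expect the bookkeeping for the unbounded downward steps (a single descent $\d^b$ may drop past many levels) to be the delicate point in justifying the kernel computation; as a sanity check, all $p_i=1$ gives $P=z/(1-z)$ and recovers $z\,C_{\pw}^2-C_{\pw}+1=0$, i.e.\ $C_{\pw}=C(z)$.

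Finally I will substitute $P(\p,z)=\tfrac{P(\r,z)}{1-P(\t,z)+P(\r,z)}$ into the quadratic for $C_{\pw}$ and rewrite $C_{\pw}=(1-P(\t,z)+P(\r,z))\,C_{\sw,\aw}$. Writing $T=P(\t,z)$ and $R=P(\r,z)$, clearing the common denominator produces
\[z(1-T+R)^2\,C_{\sw,\aw}^2-\bigl[(1+z)(1-T)+2zR\bigr]\,C_{\sw,\aw}+1=0,\]
and solving this quadratic, after checking that its discriminant simplifies to $(1-z)\bigl[(1-T)^2-z(1-T+2R)^2\bigr]$, gives the stated closed form. This last step is routine algebra, the only mild care being the choice of square-root branch so that $C_{\sw,\aw}=1+O(z)$.
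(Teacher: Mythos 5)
Your proposal is correct, and its overall architecture --- inserting marked pyramids at the insertion points of a base path, removing the marking via the inclusion--exclusion substitution, and then passing from (all peaks, marked symmetric peaks) to (symmetric, asymmetric) by setting each base peak's weight to $r_i$ so that an inserted marked peak of weight $j$ contributes $t_j-r_j$ --- is exactly the paper's. Two things differ in the middle step. First, you streamline the bookkeeping: the paper carries a separate valley variable, working with $C_{\pw,\val}(\p,v,z)$ and $C_{\pw,\ins}(\p,q,z)=q^2(C_{\pw,\val}(\p,q,z)-1)+q$, whereas you note that $\ins(D)=\pea(D)+1$ holds uniformly (including for the empty path) and that $q^{\pea(D)}$ can be absorbed into the weight variables, giving $C_{\pw,\ins}(\p,q,z)=q\,C_{\pw}(q\p,z)$; this is a genuine and correct simplification that reduces everything to the single series $C_{\pw}$. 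Second, to compute $C_{\pw}$ you decompose a path into its maximal mountains and run the kernel method on the height-indexed family $M_h$, arriving at $zC_{\pw}^2-\bigl[(1+z)-(1-z)P(\p,z)\bigr]C_{\pw}+1=0$; the paper instead obtains the equivalent quadratic \eqref{eq:Cpwval} directly from the first-return decomposition $D=\u D_1\d D_2$, using the observation that peak weights are preserved except that the peak coming from $D_1$ gains weight $1$ precisely when $D_1$ is empty or a pyramid, which produces the correction term $(1-z)P(\p,z)-z$. The first-return route is shorter and sidesteps the convergence care you flag for $\mathcal{M}(s)$ at the small root (which does go through, since $s_0=z+O(z^2)$ so $M_hs_0^h$ has $z$-valuation at least $h$); your mountain decomposition, in exchange, makes it transparent from the outset why $C_{\pw}$ depends on $\p$ only through $P(\p,z)$. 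I checked your recursion for $M_h$, the kernel $K(s)=s^2-[(1+z)-(1-z)P]s+z$, the identification $s_0=zC_{\pw}$ (via the $s=0$ specialization of the functional equation), and the final quadratic $z(1-T+R)^2C_{\sw,\aw}^2-[(1+z)(1-T)+2zR]C_{\sw,\aw}+1=0$ with discriminant $(1-z)[(1-T)^2-z(1-T+2R)^2]$; all are correct and yield the stated formula.
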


\begin{proof}
We modify the proof of Theorem~\ref{thm:speaapea} in order to keep track of the weight of each peak. 
First, let us refine Equation~\eqref{eq:Cval} by introducing variables $p_i$ keeping track of the number of peaks of weight $i$ in the Dyck path, for $i\ge1$. 
In the decomposition of non-empty Dyck paths as $D=\u D_1\d D_2$, peak weights are preserved except when $D_1$ is empty or it is a pyramid, in which case the weight of the corresponding peak increases by one. Thus, letting $C_{\pw,val}(\p,v,z)=\sum_{D\in\D} \p^{\pw(D)}v^{\val(D)}z^{|D|}$, we get
\begin{align}\nonumber &C_{\pw,\val}(\p,v,z)\\ \nonumber
&=1+z\left(C_{\pw,\val}(\p,v,z)+(p_1-1)+(p_2-p_1)z+(p_3-p_2)z^2+\dots\right)(vC_{\pw,\val}(\p,v,z)-v+1)\\
&=1+\left(zC_{\pw,\val}(\p,v,z)-z+(1-z)P(\p,z)\right)(vC_{\pw,\val}(\p,v,z)-v+1).
\label{eq:Cpwval}
\end{align}

Denoting by $\ins(D)$ the number of insertion points of $D$, defined as in the proof of Theorem~\ref{thm:speaapea}, we have
$$C_{\pw,\ins}(\p,q,z)=\sum_{D\in\D}\p^{\pw(D)}q^{\ins(D)}z^{|D|}=q^2(C_{\pw,\val}(\p,q,z)-1)+q.$$
Solving Equation~\eqref{eq:Cpwval} for $C_{\pw,\val}(\p,v,z)$ gives
\begin{equation}C_{\pw,\ins}(\p,q,z)=\frac{q\left(1+z-q(1-z)P(\p,z)-\sqrt{(1-z)\left[(1-qP(\p,z))^2-z(1+qP(\p,z))^2\right]}\right)}{2z}.\label{eq:Cpwins}\end{equation}

Next we obtain a generating function for Dyck paths with marked symmetric peaks, by inserting, at each insertion point, a sequence of pyramids whose peaks are marked. Letting a variable $u_i$ keep track of inserted marked peaks of weight $i$ (while $x_i$ still keeps track of all peaks of weight $i$), such insertions correspond to the substitution $q=1/(1-\sum_{i\ge1}p_iu_iz^i)$ in $C_{\pw,\ins}(\p,q,z)$. A slight variation of Equation~\eqref{eq:incl-excl}, where we replace $S(D)$ with the set of symmetric peaks of weight $i$ of $D$, shows that the substitutions $u_i=t_i-1$, for $i\ge1$, yield the generating function where $t_i$ keeps track of the total number of symmetric peaks of weight $i$ in Dyck paths, that is,
$$C_{\pw,\sw}(\p,\t,z)=\sum_{D\in\D}\p^{\pw(D)}\t^{\sw(D)}z^{|D|}=C_{\pw,\ins}\left(\p,\frac{1}{1-\sum_{i\ge1}t_ip_iz^i+\sum_{i\ge1}p_iz^i},z\right).$$ 

Finally, using the fact that $\apea_i(D)=\pea_i(D)-\spea_i(D)$ for all $i$, the generating function with variables $t_i$ and $r_i$ keeping track of the number of symmetric and asymmetric peaks of weight $i$, respectively, is obtained by making the substitutions $p_i=r_i$ and $t_i=t_i/r_i$, resulting in
$$C_{\sw,\aw}(\t,\r,z)=C_{\pw,\ins}\left(\r,\frac{1}{1-P(\t,z)+P(\r,z)},z\right).$$ 
The stated expression for $C_{\sw,\aw}(\t,\r,z)$ now follows from Equation~\eqref{eq:Cpwins}.
\end{proof}

The first few terms of the series expansion of $C_{\sw,\aw}(\t,\r,z)$ are
$$1+t_1z+(t_1^2+t_2)z^2+(t_1^3+2t_1 t_2+r_1^2+t_3)z^3+(t_1^4+3t_1^2t_2+3t_1r_1^2+2t_1t_3+t_2^2+r_1^2+2r_1r_2+t_4)z^4+\cdots.$$

From Theorem~\ref{thm:swaw}, one can easily deduce a generating function with variables keeping track of the sum of weights of symmetric peaks and the  sum of weights of asymmetric peaks, denoted by $\swsum(P)=\sum_{i\ge1} \sw_i(P)$ and  $\awsum(P)=\sum_{i\ge1} \aw_i(P)$, respectively.
Let $$C_{\spea,\apea,\swsum,\awsum}(t,r,w,y,z)=\sum_{P\in\D} t^{\spea(P)}r^{\apea(P)}w^{\swsum(P)}y^{\awsum(P)} z^{|P|}.$$ 

\begin{corollary}\label{cor:swsum}
The generating function for Dyck paths with respect to the number of symmetric peaks, the number of asymmetric peaks, and the sum of their respective weights is
\begin{multline*}C_{\spea,\apea,\swsum,\awsum}(t,r,w,y,z)\\
=\frac{1+z-\dfrac{twz(1+z)}{1-wz}+\dfrac{2ryz^2}{1-yz}-\sqrt{(1-z)\left[\left(1-\dfrac{twz}{1-wz}\right)^2-z\left(1-\dfrac{twz}{1-wz}+\dfrac{2ryz}{1-yz}\right)^2\right]}}{2z\left(1-\dfrac{twz}{1-wz}+\dfrac{ryz}{1-yz}\right)^2}.
\end{multline*}
\end{corollary}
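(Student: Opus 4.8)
The plan is to obtain this corollary as a direct specialization of the multivariate generating function in Theorem~\ref{thm:swaw}. The key observation is that the two coarse statistics $\spea,\swsum$ are determined by the finer data $\sw(D)=(\spea_1(D),\spea_2(D),\dots)$, and likewise $\apea,\awsum$ are determined by $\aw(D)$. Indeed $\spea(D)=\sum_{i\ge1}\spea_i(D)$ while the sum of weights is $\swsum(D)=\sum_{i\ge1} i\,\spea_i(D)$, so a single symmetric peak of weight $i$ should contribute a factor $t\,w^{i}$ to the monomial $t^{\spea(D)}w^{\swsum(D)}$. This dictates the substitution $t_i=tw^i$. By the same reasoning, using $\awsum(D)=\sum_{i\ge1} i\,\apea_i(D)$, an asymmetric peak of weight $i$ should contribute $r\,y^i$, so we set $r_i=ry^i$.

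First I would confirm that these substitutions recover exactly the intended generating function. Under $t_i=tw^i$ and $r_i=ry^i$ one has
\[\t^{\sw(D)}=\prod_{i\ge1}(tw^i)^{\spea_i(D)}=t^{\spea(D)}w^{\swsum(D)},\qquad \r^{\aw(D)}=\prod_{i\ge1}(ry^i)^{\apea_i(D)}=r^{\apea(D)}y^{\awsum(D)},\]
so that the substituted series $C_{\sw,\aw}(\t,\r,z)$ becomes precisely $C_{\spea,\apea,\swsum,\awsum}(t,r,w,y,z)$. Next I would evaluate the two auxiliary sums $P(\x,z)=\sum_{i\ge1}x_iz^i$ appearing in Theorem~\ref{thm:swaw}: both become geometric series, giving $P(\t,z)=\sum_{i\ge1}tw^iz^i=\tfrac{twz}{1-wz}$ and $P(\r,z)=\tfrac{ryz}{1-yz}$. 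Substituting these closed forms into the formula of Theorem~\ref{thm:swaw}—so that $(1+z)P(\t,z)$ produces the term $\tfrac{twz(1+z)}{1-wz}$, $2zP(\r,z)$ produces $\tfrac{2ryz^2}{1-yz}$, and the corresponding replacements are made inside the radical and in the denominator factor $2z(1-P(\t,z)+P(\r,z))^2$—yields the stated expression.

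Since the argument is fundamentally a bookkeeping substitution into an already-established formula, I do not expect a genuine obstacle. The only points requiring care are purely administrative: applying each weight-tracking substitution to the correct family of variables ($t_i=tw^i$ to the symmetric block, $r_i=ry^i$ to the asymmetric block), and justifying the evaluation of $P(\t,z)$ and $P(\r,z)$ as formal power series in $z$. The latter is legitimate because $P(\x,z)$ has no constant term, so the substitution is well defined coefficientwise and the geometric sums converge in the ring of formal power series. A quick sanity check against the listed initial terms of $C_{\sw,\aw}(\t,\r,z)$—for instance the $z^2$ coefficient $t_1^2+t_2$ specializing to $t^2w^2+tw^2$, matching the contributions of $\u\d\u\d$ and $\u\u\d\d$—confirms that the substitution encodes the sum of weights correctly.
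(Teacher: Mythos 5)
Your proposal is correct and follows exactly the paper's own argument: specialize Theorem~\ref{thm:swaw} via $t_i=tw^i$ and $r_i=ry^i$, which turns $P(\t,z)$ and $P(\r,z)$ into the geometric sums $twz/(1-wz)$ and $ryz/(1-yz)$. Your added justification that the substitution correctly encodes $\spea,\swsum,\apea,\awsum$, and the coefficient sanity check, are sound but not a departure from the paper's route.
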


\begin{proof}
By definition, $C_{\spea,\apea,\swsum,\awsum}(t,r,w,y,z)$ is obtained from $C_{\sw,\aw}(\t,\r,z)$ by making the substitutions $t_i=tw^i$ and $r_i=ry^i$ for all $i\ge1$. When applied to $P(\t,x)$ and $P(\r,x)$, these substitutions yield $twz/(1-wz)$ and $ryz/(1-yz)$, respectively.
The formula now follows from Theorem~\ref{thm:swaw}.
\end{proof}

Note that, setting $w=y=1$ in Corollary~\ref{cor:swsum}, we 
recover the expression from Theorem~\ref{thm:speaapea} for $C_{\spea,\apea}(t,r,z)$. Another interesting specialization of $C_{\spea,\apea,\swsum,\awsum}(t,r,w,y,z)$ is obtained by setting $t=r=1$ and $y=w=q$. In this case, we recover 
the main result from Denise and Simion's paper~\cite{denise_two_1995}, which gives a generating function
where $q$ keeps track of the sum of the weights of all peaks (both symmetric and assymetric) of the path, a statistic referred to as {\em pyramid weight} by Denise and Simion:
$$\sum_{D\in\D}q^{\swsum(D)+\awsum(D)}z^{|D|}=\frac{1+z-2qz+\sqrt{(1-4z)(1-qz)^2+z(1-q)(2+z-3qz)}}{2z(1-qz)}.$$

Finally, setting $t=r=y=1$ in Corollary~\ref{cor:swsum}, differentiating with respect to $w$, and then setting $w=1$, we recover the generating function from~\cite[Thm.\ 3.3]{florez_enumerating_2020}  for the total sum of the weights of symmetric peaks over all paths in $\D_n$.

\begin{corollary}\label{cor:totalswsum}
The generating functions for the total sum of the weights of symmetric peaks 
and the total sum of the weights of asymmetric peaks in Dyck paths are, respectively,
$$\sum_{D\in\D} \swsum(D) z^{|D|} = \left.\frac{\partial}{\partial w} C_{\spea,\apea,\swsum,\awsum}(1,1,w,1,z)\right|_{w=1}=\frac{5z-1+(1-z)\sqrt{1-4z}}{2(1-z)^2\sqrt{1-4z}},$$
$$\sum_{D\in\D} \awsum(D) z^{|D|} = \left.\frac{\partial}{\partial y} C_{\spea,\apea,\swsum,\awsum}(1,1,1,y,z)\right|_{y=1}=\frac{1-3z-(1-z)\sqrt{1-4z}}{(1-z)^2\sqrt{1-4z}}.$$
\end{corollary}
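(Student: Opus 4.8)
The plan is to prove both identities as explicit computations from Corollary~\ref{cor:swsum}, since in each line the first equality is just the standard device for extracting a total statistic from a generating function, and the second is the evaluation of a derivative. For the first equality, observe that setting $t=r=y=1$ in $C_{\spea,\apea,\swsum,\awsum}$ leaves the bivariate series $\sum_{D\in\D} w^{\swsum(D)} z^{|D|}$, in which $w$ marks the total weight of the symmetric peaks. Applying $\partial/\partial w$ multiplies the term of each $D$ by $\swsum(D)\,w^{\swsum(D)-1}$, and setting $w=1$ collapses the $w$-powers, yielding $\sum_{D\in\D}\swsum(D)z^{|D|}$ as claimed; the argument for $\awsum$ with the variable $y$ is verbatim.

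The substance is the second equality. I would first substitute $t=r=y=1$ into Corollary~\ref{cor:swsum} and abbreviate $A=\tfrac{wz}{1-wz}$ and $B=\tfrac{z}{1-z}$. The key algebraic facts that make the evaluation tractable are that at $w=1$ one has $A=B$, so the denominator factor $1-A+B$ equals $1$ and the denominator is simply $2z$; that $1-A+2B=\tfrac{1}{1-z}$; and that the discriminant simplifies through the factorization $1-5z+4z^2=(1-z)(1-4z)$, so that the square root evaluates to $\sqrt{1-4z}$. The non-radical part of the numerator, evaluated at $w=1$, then telescopes to $1$, so that $N(1)=1-\sqrt{1-4z}$. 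As a consistency check the value at $w=1$ is $\tfrac{1-\sqrt{1-4z}}{2z}=C(z)$, as it must be.

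To finish I would apply the quotient rule to $F(w)=N(w)/E(w)$ with $E(w)=2z(1-A+B)^2$. Using $A'(1)=\tfrac{z}{(1-z)^2}$ one obtains $E(1)=2z$ and $E'(1)=-\tfrac{4z^2}{(1-z)^2}$, while $N'(1)$ follows from the chain rule applied to the radicand $R(w)=(1-z)\bigl[(1-A)^2-z(1-A+2B)^2\bigr]$, for which $R(1)=1-4z$ and $R'(1)=-\tfrac{2z(1-3z)}{(1-z)^2}$. Substituting these into $F'(1)=\bigl(N'E-NE'\bigr)/E^2$ and clearing over the common denominator $2(1-z)^2\sqrt{1-4z}$ produces the first stated formula. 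The asymmetric case is structurally identical: set $t=r=w=1$, let $C=\tfrac{yz}{1-yz}$ be the variable quantity, differentiate in $y$, and the same cancellations give the second formula.

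I expect the only real obstacle to be bookkeeping in the derivative of the radical and verifying that the extraneous factors of $1-z$ cancel so that the denominator comes out exactly as $2(1-z)^2\sqrt{1-4z}$; there is no conceptual difficulty beyond careful algebra, and the evaluation $F(1)=C(z)$ provides a useful safeguard against sign errors along the way.
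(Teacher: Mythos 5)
Your proposal is correct and follows exactly the paper's own (very brief) argument: specialize Corollary~\ref{cor:swsum} at $t=r=y=1$ (resp.\ $t=r=w=1$), differentiate in $w$ (resp.\ $y$), and evaluate at $1$. The intermediate values you record ($A(1)=B$, $R(1)=1-4z$, $N(1)=1-\sqrt{1-4z}$, $E'(1)=-4z^2/(1-z)^2$, $R'(1)=-2z(1-3z)/(1-z)^2$) all check out and do produce the stated closed forms.
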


Comparing these formulas with the ones in Corollary~\ref{cor:totalspea}, we see that 
$$\sum_{D\in\D} \swsum(D) z^{|D|}=\frac{1}{1-z}\sum_{D\in\D} \spea(D) z^{|D|},
\qquad\sum_{D\in\D} \awsum(D) z^{|D|}=\frac{1}{1-z}\sum_{D\in\D} \apea(D) z^{|D|}.$$
Equating the coefficients of $z^n$ on both sides, these equalities are equivalent to
\begin{equation}\label{eq:sws_sp} \sum_{D\in\D_n} \swsum(D) = \sum_{k=0}^n\sum_{D\in\D_k}\spea(D),
\qquad 
\sum_{D\in\D_n} \awsum(D) = \sum_{k=0}^n\sum_{D\in\D_k}\apea(D).
\end{equation}
In other words, the sum of the weights of all symmetric peaks of all Dyck paths of semilength $n$ equals the total number of symmetric peaks of all Dyck paths of semilength at most $n$, and similarly for asymmetric peaks. Next we give a simple bijective proof of this phenomenon.

Consider the first equality in Equation~\eqref{eq:sws_sp}; the second equality can be proved analogously.
The right-hand side can be interpreted as counting Dyck paths of semilength at most $n$ with a distinguished symmetric peak. 
The left-hand side can be interpreted as counting pairs $(\hat{D},i)$, where $\hat{D}$ is a Dyck path of semilength $n$ with a distinguished symmetric peak, and $i$ is an integer between $1$ and the weight of this distinguished peak. 

A bijection between the sets counted by both sides can be described as follows. Given a Dyck path of semilength $k\le n$ with a distinguished peak, insert a pyramid $\u^{n-k}\d^{n-k}$ at the top of the distinguished peak. The image is then the pair $(\hat{D},i)$ where $\hat{D}$ is the resulting Dyck path of semilength $n$, with the same distinguished peak, and $i=n-k$. Conversely, given a pair $(\hat{D},i)$ counted by the left-hand side of the equation, delete the steps $\u^i\d^i$ going through the distinguished peak, to obtain a Dyck path of semilength at most $n$ with the same distinguished peak.

\section{Symmetric valleys}\label{sec:val}

\subsection{The number of symmetric valleys}\label{sec:valleys}

In analogy to symmetric peaks, it is natural to consider the notion of symmetric valleys. We say that a valley is symmetric if the maximal subsequence of the form $\d^i\u^j$ that contains it satisfies $i=j$. For a path $D\in\D$, we denote its number of symmetric valleys by $\sval(D)$.
This number coincides with the number of pairs of consecutive peaks at the same height, where the height of a peak is defined to be the $y$-coordinate of its highest vertex. Indeed, symmetric valleys are precisely those enclosed by consecutive peaks at the same height. The study of the number of pairs of consecutive peaks at the same height was originally proposed by Emeric Deutsch~\cite{deustch_personal_nodate}. 

Despite the analogous definition, symmetric valleys in Dyck paths are more difficult to handle than symmetric peaks. The reason is that the method used to insert marked symmetric peaks in the proof of Theorem~\ref{thm:speaapea} does not directly extend to symmetric valleys, since the size of the symmetric valley that we are allowed to insert is bounded by the height of the insertion point.

To address this problem, we will keep track of the heights of the insertion points. The insertion points for marked symmetric valleys are simply the tops of the peaks. For $D\in\D$ and $i\ge1$, let $\ph_i(D)$ denote the number of peaks of $D$ at height $i$, let $\phbf(D)=(\ph_1(D),\ph_2(D),\dots)$, let
$\h^{\phbf(D)}=\prod_{i\ge1} h_i^{\ph_i(D)}$, and let
$$C_{\phbf}(\h,z)=\sum_{D\in\D}\h^{\phbf(D)}z^{|D|}.$$

Every non-empty Dyck path $D\in\D$ can be decomposed uniquely as $D=\u D_1\d\u D_2\d\dots\u D_k\d$, where $k\ge1$ and $D_i\in\D$ for all $i$. The peaks of each $D_i$ become peaks of $D$, with their height increased by one; in addition, $D$ has an extra peak at height $1$ for each empty $D_i$. Iterating this decomposition results in the continued fraction expansion
 \beq\label{eq:Cph}
 C_{\phbf}(\h,z)=\dfrac{1}{1-z(h_1-1)-\dfrac{z}{1-z(h_{2}-1)-\dfrac{z}{1-z(h_{3}-1)-\dfrac{z}{\ddots}}}}.\eeq
Next we use this expression to give a continued fraction for the generating function $C_{\sval}(s,z)=\sum_{D\in\D} s^{\sval(D)}z^{|D|}$. 
For a positive integer $n$ and a variable $q$, we use the $q$-analog notation $$[n]_q=1+q+q^2+\dots+q^{n-1}=\frac{1-q^n}{1-q}.$$

\begin{theorem}\label{thm:sval}
The generating function for Dyck paths with respect to the number of symmetric valleys is
\begin{multline*}
C_{\sval}(s,z)\\
=\dfrac{1}{1+z-\dfrac{z}{1-(s-1)z}-\dfrac{z}{1+z-\dfrac{z}{1-(s-1)(z+z^2)}-\dfrac{z}{1+z-\dfrac{z}{1-(s-1)(z+z^2+z^3)}-\dfrac{z}{\ddots}}}}.
\end{multline*}
\end{theorem}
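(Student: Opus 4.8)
The plan is to build on the continued fraction \eqref{eq:Cph} for $C_{\phbf}(\h,z)$, which records the heights of all peaks, and to mirror the marking/insertion strategy from the proof of Theorem~\ref{thm:speaapea}, now inserting ``anti-pyramids'' $\d^i\u^i$ at the tops of peaks. The reason \eqref{eq:Cph} is the right starting point is precisely the obstruction noted in the text: a symmetric valley of weight $i$ can only be created at an insertion point (a peak top) of height at least $i$, so we must keep track of peak heights in order to know which anti-pyramids are insertable where.

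First I would set up the insertion of marked symmetric valleys. A single marked anti-pyramid $\d^i\u^i$ contributes $u\,z^i$, where $u$ marks the created valley and $z^i$ records its semilength. At a peak of height $n$ only the weights $1\le i\le n$ are admissible, so a (possibly empty) sequence of marked anti-pyramids inserted at such a peak is enumerated by $\frac{1}{1-u(z+z^2+\cdots+z^n)}=\frac{1}{1-uz[n]_z}$. Since in \eqref{eq:Cph} each peak of height $n$ contributes a single factor $h_n$, decorating every height-$n$ peak with such a sequence (and no longer tracking the peaks themselves) amounts to the substitution $h_n\mapsto \frac{1}{1-uz[n]_z}$ for all $n\ge1$.

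Next I would remove the marking by inclusion--exclusion, exactly as in \eqref{eq:incl-excl}: letting $u=s-1$ turns the weight $\sum_{M\subseteq S(D)}(s-1)^{|M|}$ attached to a path $D$ with symmetric-valley set $S(D)$ into $s^{\sval(D)}$, so that the resulting series is $C_{\sval}(s,z)$. It then only remains to simplify the substituted continued fraction. Using $1-z(h_n-1)=1+z-zh_n$ and $zh_n=\frac{z}{1-(s-1)z[n]_z}=\frac{z}{1-(s-1)(z+\cdots+z^n)}$, each level of \eqref{eq:Cph} turns into $1+z-\frac{z}{1-(s-1)(z+\cdots+z^n)}-\frac{z}{(\text{next level})}$, which is exactly the claimed expression for $C_{\sval}(s,z)$.

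The step I expect to require the most care is justifying that the insertion map is a bijection between Dyck paths carrying a distinguished set of marked symmetric valleys and (base path, inserted-anti-pyramid) data, with the height constraint correctly enforced. One must check that inserting $\d^i\u^i$ at a height-$n$ peak with $i\le n$ produces a genuine maximal symmetric valley of weight $i$ without disturbing the existing symmetric valleys, and that the inverse operation---deleting the marked maximal anti-pyramids and merging the two equal-height peaks flanking each---recovers the base path unambiguously. This is the analogue of the argument in Theorem~\ref{thm:speaapea}, but it is genuinely more delicate here because the admissible insertions depend on the height of the insertion point, which is the very reason the peak-height continued fraction \eqref{eq:Cph} must be used rather than the simpler decompositions of Section~\ref{sec:peaks}.
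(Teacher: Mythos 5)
Your proposal is correct and follows essentially the same route as the paper: both insert (possibly empty) sequences of marked pits $\d^j\u^j$ at peak tops, enforce the height constraint $j\le i$ via the substitution $h_i=\frac{1}{1-(s-1)z[i]_z}$ in the peak-height continued fraction~\eqref{eq:Cph}, and remove the marking by the inclusion--exclusion step of~\eqref{eq:incl-excl}. The bijectivity concern you flag is real but routine, and the paper likewise leaves it implicit.
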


\begin{proof}
Following a similar argument as in the proofs of Theorems~\ref{thm:speaapea} and~\ref{thm:spea'}, we will construct Dyck paths with marked symmetric valleys by inserting (possibly empty) sequences of {\em pits} $\d^j\u^j$ at the top vertices of the peaks of a Dyck path.
The valleys at the bottom of these inserted pits are marked. The main difference with the previous proofs, however, is that the inserted pits cannot be arbitrarily large: pits inserted at a peak of height $i$ must be of the form $\d^j\u^j$ for $1\le j\le i$.

Such insertions of marked peaks, followed by the inclusion-exclusion argument from Equation~\eqref{eq:incl-excl} that gets rid of the marking, correspond to the substitutions
$$h_i=\frac{1}{1-(s-1)(z+z^2+\dots+z^i)}=\frac{1}{1-(s-1)z[i]_z}$$
in the formula for $C_{\phbf}(\h,z)$ given in Equation~\eqref{eq:Cph}, resulting in the stated expression for $C_{\sval}(s,z)$.
\end{proof}

The first terms of the series expansion of $C_{\sval}(s,z)$ are
$$1+z+(t+1)z^2+(t^2+t+3)z^3+(t^3+t^2+6t+6)z^4+(t^4+t^3+9t^2+15t+16)z^5+\cdots.$$
The constant terms of these polynomials,
$1, 1, 1, 3, 6, 16, 43, 116, 329, 947, 2762, 8176, 24469, \dots$, which enumerate Dyck paths with no symmetric valleys, do not match any existing sequence in~\cite{sloane_-line_nodate}.

Despite not having a closed form for $C_{\sval}(s,z)$, we can use Theorem~\ref{thm:sval} to obtain a simple generating function for the total number of symmetric valleys in Dyck paths.

\begin{corollary}\label{cor:totalsval}
The generating function for the total number of symmetric valleys in Dyck paths is
$$\sum_{D\in\D} \sval(D) z^{|D|} = \left.\frac{\partial}{\partial s} C_{\sval}(s,z) \right|_{s=1}=\frac{2z^2}{1-3z-4z^2+(1-z)\sqrt{1-4z}}.$$
\end{corollary}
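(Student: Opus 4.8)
The plan is to extract $\sum_{D\in\D}\sval(D)z^{|D|}$ by differentiating the continued fraction of Theorem~\ref{thm:sval} with respect to $s$ and setting $s=1$. The key simplification is that at $s=1$ every factor $1-(s-1)z[i]_z$ collapses to $1$, so the entire continued fraction degenerates to the one appearing in Equation~\eqref{eq:Cph} with all $h_i=1$, which is just $C(z)$. Indeed, setting all $h_i=1$ in~\eqref{eq:Cph} gives
$$
C(z)=\cfrac{1}{1-\cfrac{z}{1-\cfrac{z}{1-\cfrac{z}{\ddots}}}},
$$
the standard continued fraction for the Catalan generating function~\eqref{eq:C}. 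So the zeroth-order term is under control, and I only need the first-order behavior in $s-1$.

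The main technical tool will be the logarithmic-derivative / telescoping structure of continued fractions. I would write the continued fraction of Theorem~\ref{thm:sval} as a sequence of approximants $C^{(1)},C^{(2)},C^{(3)},\dots$ where $C^{(i)}$ denotes the tail starting at level $i$, so that $C^{(i)}=1/(1+z-z/(1-(s-1)z[i]_z)-zC^{(i+1)})$. Each tail $C^{(i)}$ depends on $s$ only through the levels $\ge i$. Differentiating this recursion at $s=1$, and using that at $s=1$ each tail equals the same quantity — call it $B(z)$, the value of the all-ones continued fraction from level~$1$ on, which one checks equals $C(z)$ — yields a linear recurrence relating $\partial_s C^{(i)}|_{s=1}$ to $\partial_s C^{(i+1)}|_{s=1}$ plus an inhomogeneous term coming from the explicit $s$-dependence of the $i$-th partial denominator. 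Concretely, the inhomogeneous contribution at level $i$ is the derivative of $-z/(1-(s-1)z[i]_z)$ at $s=1$, which is $-z\cdot z[i]_z=-z^2[i]_z$.

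Carrying this out, each level $i$ contributes a term proportional to $z^2[i]_z$ multiplied by a product of factors $B(z)^2$ (the square of the approximant, since differentiating $1/(1+z-\cdots-zC^{(i+1)})$ brings down $C^{(i)}{}^2$ and the chain $\partial_s C^{(i)}=C^{(i)}{}^2\,z\,\partial_s C^{(i+1)}+\text{(local term)}\cdot C^{(i)}{}^2$). Summing the geometric-type series over all levels $i\ge1$, with the local term $z^2[i]_z=z^2(1-z^i)/(1-z)$ weighted by $(zB(z)^2)^{i-1}$, produces a closed rational expression in $z$ and $B(z)=C(z)$. The two pieces of $[i]_z$ give two geometric sums: $\sum_{i\ge1}(zC^2)^{i-1}=1/(1-zC^2)$ and $\sum_{i\ge1}z^i(zC^2)^{i-1}=z/(1-z^2C^2)$; substituting $C(z)=\frac{1-\sqrt{1-4z}}{2z}$ and simplifying should collapse to $\frac{2z^2}{1-3z-4z^2+(1-z)\sqrt{1-4z}}$, matching the claimed answer after rationalizing.

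The hard part will be bookkeeping the infinite sum correctly: I must verify that the all-ones tails indeed all equal $C(z)$ (so that the homogeneous factors telescope into a clean geometric series with ratio $zC(z)^2$), and that the weights $z^2[i]_z$ enter with exactly the product of squared approximants claimed, with no off-by-one error in which level carries the $[i]_z$ factor. A cleaner alternative, which I would use as a sanity check, is to avoid the infinite continued fraction entirely: since $\sval$ counts symmetric valleys, each of which can be created by inserting a single pit $\d^j\u^j$ (weight $s$) at a peak of height $\ge j$, the linear term in $s$ counts exactly pairs $(D',\text{peak }p\text{ of height }i,\ j\le i)$, i.e.\ triples weighted by the number of admissible pit sizes. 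This reduces the computation to $\sum_{i\ge1}[i]_z\cdot z\cdot(\text{gf for Dyck paths with a marked peak at height }i)$, where the marked-peak-at-height-$i$ generating function is read off from the continued fraction~\eqref{eq:Cph} as $z\,C(z)^2(zC(z)^2)^{i-1}=z(zC(z)^2)^{i}/C(z)$ up to normalization; matching this against the continued-fraction derivative confirms the coefficient structure and gives the same closed form.
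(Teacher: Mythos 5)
Your proposal is essentially the paper's own proof: the paper likewise defines the tails $F_k(s)$ of the continued fraction, observes $F_k(1)=C(z)$ for all $k$, differentiates the one-step recursion at $s=1$ to get $F_k'(1)=(C(z)-1)\bigl(F_{k+1}'(1)+z[k]_z\bigr)$, and telescopes to the sum $\sum_{i\ge1}(C(z)-1)^i z[i]_z$, which splits into the same two geometric series you describe. The only caveat is a small bookkeeping slip in your phrasing (the level-$i$ local term carries weight $C(z)^2 z^2[i]_z\,(zC(z)^2)^{i-1}=(C(z)-1)^i z[i]_z$, not just $z^2[i]_z\,(zC(z)^2)^{i-1}$), but you already flag the extra squared-approximant factor, and the final sums you write are consistent with the correct weights.
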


\begin{proof}
To compute the partial derivative of $C_{\sval}(s,z)$ with respect to $s$, we define, for $k\ge1$,
$$F_k(s)
=\dfrac{1}{1+z-\dfrac{z}{1-(s-1)z[k]_z}-\dfrac{z}{1+z-\dfrac{z}{1-(s-1)z[k+1]_z}-\dfrac{z}{1+z-\dfrac{z}{1-(s-1)z[k+2]_z}-\dfrac{z}{\ddots}}}}.$$
Note that $F_1(s)=C_{\sval}(s,z)$ by definition, and that $F_k(1)=C_{\sval}(1,z)=C(z)$ for all $k\ge1$, where $C(z)$ is given by Equation~\eqref{eq:C}. Additionally, 
$$F_k(s)=\dfrac{1}{1+z-\dfrac{z}{1-(s-1)z[k]_z}-z F_{k+1}(s)}$$
for $k\ge1$. Differentiating this expression with respect to $s$ and evaluating at $s=1$, we get
$$F'_k(1)=\frac{zF'_{k+1}(1)+z^2[k]_z}{(1-zF_{k+1}(1))^2}=(C(z)-1)(F'_{k+1}(1)+z[k]_z),$$
where we used that $\frac{1}{1-zC(z)}=C(z)$ and $zC(z)^2=C(z)-1$. Iterating this equality, it follows that
$$\left.\frac{\partial}{\partial s} C_{\sval}(s,z) \right|_{s=1}=F'_1(1)=\sum_{i\ge1}(C(z)-1)^i z [i]_z=
\frac{z}{1-z}\left(\frac{C(z)-1}{2-C(z)}-\frac{z(C(z)-1)}{1-z(C(z)-1)}\right),$$
which simplifies to the stated expression.
\end{proof}

The coefficients of the generating function in Corollary~\ref{cor:totalsval} appear as sequence A014301 in~\cite{sloane_-line_nodate}, whose $n$th term counts the total number of internal nodes of even outdegree in all ordered rooted trees with $n$ edges, and also
the total number of protected vertices (i.e., at distance at least $2$ from every leaf) in the same set of trees. 
\begin{problem} 
Find a combinatorial proof of the fact that the total number of symmetric valleys in all paths in $\D_n$ equals the total number of internal nodes of even outdegree in all ordered rooted trees with $n$ edges. 
\end{problem}
Note, however, that the distribution of the number of symmetric valleys over $\D_n$ does not coincide with the distribution of either the number of internal nodes of even outdegree or the number of protected vertices in ordered rooted trees.

\subsection{Weights of symmetric valleys} \label{sec:valley_weights}

Similarly to the definition of peak weights, the {\em weight} of a symmetric valley is the largest $i$ such that the valley is contained a pit $\d^i\u^i$. For $D\in\D$ and $i\ge1$, let $\sval_i(D)$ denote the number of symmetric valleys of weight $i$ in $D$, and let $\svwsum(D)=\sum_{i\ge1} \sval_i(D)$.
It is easy to modify Theorem~\ref{thm:sval} by introducing variables $s_i$ that keep track of the number of symmetric valleys of weight $i$. We will focus on the specialization of this generating function that records the sum of the weights of the symmetric peaks, namely $C_{\sval,\svwsum}(s,w,z)=\sum_{D\in\D}s^{\sval(D)}w^{\svwsum(D)}z^{|D|}$.

\begin{theorem}\label{thm:svw}
The generating function for Dyck paths with respect to the number of symmetric valleys and sum of their weights is
\begin{multline*}
C_{\sval,\svwsum}(s,w,z)\\
=\dfrac{1}{1+z-\dfrac{z}{1+z[1]_z-swz[1]_{wz}}-\dfrac{z}{1+z-\dfrac{z}{1+z[2]_z-swz[2]_{wz}}-\dfrac{z}{1+z-\dfrac{z}{1+z[3]_z-swz[3]_{wz}}-\dfrac{z}{\ddots}}}}.
\end{multline*}
\end{theorem}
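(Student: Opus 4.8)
The plan is to adapt the proof of Theorem~\ref{thm:sval} essentially verbatim, changing only the weight attached to each inserted pit so that it records a valley's weight in addition to its presence. Recall from that proof that a marked symmetric valley is produced by inserting a pit $\d^j\u^j$ at the top of a peak of height $i$, an insertion that is legal exactly when $1\le j\le i$, and that the resulting valley has weight exactly $j$: the up-step preceding the pit and the down-step following it cut off the maximal runs $\d^j$ and $\u^j$, so the valley's weight is $j$ regardless of neighboring insertions (and iterated insertions at the newly created peaks are already captured by allowing arbitrary sequences of pits at each base peak). Consequently neither the insertion geometry nor the bijection between Dyck paths carrying a marked subset of symmetric valleys and base paths with sequences of pits inserted at their peaks changes at all; only the bookkeeping does.

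First I would upgrade the inclusion--exclusion of Equation~\eqref{eq:incl-excl}. To make each symmetric valley of weight $j$ contribute a factor $sw^j$ to the final series, I assign a \emph{marked} valley of weight $j$ the factor $sw^j-1$, so that, writing $S(D)$ for the set of symmetric valleys of $D$,
$$\sum_{M\subseteq S(D)}\prod_{v\in M}\left(sw^{\operatorname{wt}(v)}-1\right)=\prod_{v\in S(D)}\left(1+\left(sw^{\operatorname{wt}(v)}-1\right)\right)=s^{\sval(D)}w^{\svwsum(D)}.$$
Thus a single marked pit of size $j$ now contributes $(sw^j-1)z^j$, in place of the $(s-1)z^j$ of Theorem~\ref{thm:sval}.

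Next I would sum the geometric series over all sequences of pits insertable at a peak of height $i$. Since each such pit has size $j\in\{1,\dots,i\}$, this produces the substitution
$$h_i=\frac{1}{1-\sum_{j=1}^i\left(sw^j-1\right)z^j}=\frac{1}{1+z[i]_z-swz[i]_{wz}},$$
the last equality using $\sum_{j=1}^i z^j=z[i]_z$ and $\sum_{j=1}^i (wz)^j=wz[i]_{wz}$. Feeding this into the continued fraction~\eqref{eq:Cph} and simplifying each factor via $1-z(h_i-1)=1+z-z h_i=1+z-\dfrac{z}{1+z[i]_z-swz[i]_{wz}}$ yields exactly the stated expression for $C_{\sval,\svwsum}(s,w,z)$. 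As a sanity check, setting $w=1$ gives $1+z[i]_z-sz[i]_z=1-(s-1)z[i]_z$, so the substitution and the whole formula collapse back to those of Theorem~\ref{thm:sval}.

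I expect the main point requiring care to be bookkeeping rather than conceptual: the genuine subtleties---that an inserted pit of size $j$ always yields a symmetric valley of weight exactly $j$, and that $1\le j\le i$ is precisely the condition for the insertion to keep the path nonnegative---are inherited unchanged from the proof of Theorem~\ref{thm:sval}, where the legality and weight of inserted pits were already established. The only new work is checking that the weight-dependent marking factor $sw^j-1$ collapses the geometric sum to the claimed $q$-analog $1+z[i]_z-swz[i]_{wz}$, which is the computation carried out above.
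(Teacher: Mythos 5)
Your proposal is correct and follows essentially the same route as the paper: both arguments rerun the pit-insertion and inclusion--exclusion scheme of Theorem~\ref{thm:sval} with the marking factor for a pit of size $j$ upgraded from $(s-1)z^j$ to $(s w^j-1)z^j$, and both reduce to the substitution $h_i=1/(1+z[i]_z-swz[i]_{wz})$ in the continued fraction~\eqref{eq:Cph}. The only cosmetic difference is that the paper first introduces separate variables $s_i$ for valleys of each weight $i$ and then specializes $s_i=sw^i$, whereas you perform that specialization in a single step.
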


\begin{proof}
We modify the proof of Theorem~\ref{thm:sval} in order to keep track of the weight of the inserted marked symmetric valleys. 
The generating function $\sum_{D\in\D}\prod_{i\ge1}s_i^{\sval_i(D)}z^{|D|}$ is obtained by making the substitutions
$$h_i=\frac{1}{1-(s_1-1)z-(s_2-1)z^2-\dots-(s_i-1)z^i}$$
in the continued fraction for $C_{\phbf}(\h,z)$, given in Equation~\eqref{eq:Cph}.

To obtain an expression for $C_{\sval,\svwsum}(s,w,z)$, we make the specialization $s_i=sw^i$, which corresponds to the substitution
$$h_i=\frac{1}{1+z[i]_z-swz[i]_{wz}}$$ in $C_{\phbf}(\h,z)$ for $i\ge1$, resulting in the stated expression.
\end{proof}

\begin{corollary}\label{cor:totalsvwsum}
The generating function for the total sum of the weights of symmetric valleys in Dyck paths is
$$\sum_{D\in\D} \svwsum(D) z^{|D|} = \left.\frac{\partial}{\partial w} C_{\sval,\svwsum}(1,w,z) \right|_{w=1}=\frac{2z^2}{1-3z-3z^2-4z^3+(1-z-3z^2)\sqrt{1-4z}}.$$
\end{corollary}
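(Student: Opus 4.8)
The plan is to mimic the proof of Corollary~\ref{cor:totalsval}, differentiating the continued fraction of Theorem~\ref{thm:svw} with respect to $w$ and evaluating at $w=1$ after setting $s=1$. First I would introduce, for $k\ge1$, the tail continued fraction $F_k(w)$ obtained from the expression in Theorem~\ref{thm:svw} by putting $s=1$ and starting the continued fraction at the $k$th level, so that $F_1(w)=C_{\sval,\svwsum}(1,w,z)$ and $F_k(w)=\dfrac{1}{1+z-\dfrac{z}{A_k(w)}-zF_{k+1}(w)}$, where $A_k(w)=1+z[k]_z-wz[k]_{wz}=1+\sum_{j=1}^k(1-w^j)z^j$. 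The key preliminary observation is that $A_k(1)=1$ for every $k$, so at $w=1$ the functional equation collapses to $F_k(1)=1/(1-zF_{k+1}(1))$; hence $F_k(1)=C(z)$ for all $k$, and I may reuse the identities $\frac{1}{1-zC(z)}=C(z)$ and $zC(z)^2=C(z)-1$ exactly as in Corollary~\ref{cor:totalsval}.

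Next I would differentiate the functional equation in $w$ and evaluate at $w=1$. The only $w$-dependence beyond $F_{k+1}$ sits in $A_k$, and a direct computation gives $A_k'(1)=-\sum_{j=1}^k jz^j$. Writing $S_k=\sum_{j=1}^k jz^j$ and using $F_k(1)=C(z)$ together with $zC(z)^2=C(z)-1$, the differentiated equation simplifies to the linear recurrence $F_k'(1)=(C(z)-1)\bigl(F_{k+1}'(1)+S_k\bigr)$. Iterating this recurrence (and noting that $(C(z)-1)^k\to0$ as a formal power series, since $C(z)-1$ has no constant term) yields $F_1'(1)=\sum_{k\ge1}(C(z)-1)^k S_k$.

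The remaining work is to evaluate this double sum in closed form. Interchanging the order of summation, I would write $\sum_{k\ge1}(C(z)-1)^k\sum_{j=1}^k jz^j=\sum_{j\ge1}jz^j\sum_{k\ge j}(C(z)-1)^k$, sum the inner geometric series, and apply $\sum_{j\ge1}jx^j=x/(1-x)^2$ with $x=z(C(z)-1)$ to obtain $F_1'(1)=\dfrac{1}{2-C(z)}\cdot\dfrac{z(C(z)-1)}{(1-z(C(z)-1))^2}$. Substituting $C(z)=\frac{1-\sqrt{1-4z}}{2z}$ and simplifying then gives the stated formula.

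I expect the main obstacle to be the final algebraic simplification: after the substitution, the expression is a ratio involving $\sqrt{1-4z}$ in both numerator and denominator, and reducing it to $\dfrac{2z^2}{1-3z-3z^2-4z^3+(1-z-3z^2)\sqrt{1-4z}}$ requires rationalizing and verifying the polynomial identity $(1-3z-3z^2-4z^3)^2-(1-z-3z^2)^2(1-4z)=z^2(-16z+48z^2+60z^3+16z^4)$. A secondary subtlety worth stating carefully is the computation of $A_k'(1)$: although $A_k$ is written via the $q$-analogs $[k]_z$ and $[k]_{wz}$, rewriting it as $1+\sum_{j=1}^k(1-w^j)z^j$ makes both $A_k(1)=1$ and $A_k'(1)=-\sum_{j=1}^k jz^j$ transparent and avoids any error in differentiating the $q$-analog.
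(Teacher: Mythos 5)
Your proposal is correct and follows essentially the same route as the paper: define the tail continued fractions $F_k(w)$ (the paper's $G_k(w)$), use $F_k(1)=C(z)$ together with $zC(z)^2=C(z)-1$ to reduce the differentiated recursion to $F_k'(1)=(C(z)-1)\bigl(F_{k+1}'(1)+S_k\bigr)$, and iterate to get $F_1'(1)=\sum_{k\ge1}(C(z)-1)^kS_k$; your derivative $A_k'(1)=-\sum_{j=1}^k jz^j$ agrees with the paper's $-z([k]_z-kz^k)/(1-z)$. The only (harmless) difference is that you evaluate the final sum by interchanging the order of summation, obtaining the compact form $\frac{1}{2-C(z)}\cdot\frac{z(C(z)-1)}{(1-z(C(z)-1))^2}$, which is algebraically equal to the paper's three-term expression and simplifies to the stated generating function.
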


\begin{proof}
We use a similar argument as in the proof of Corollary~\ref{cor:totalsval}. To compute the partial derivative of $C_{\sval,\svwsum}(1,w,z)$ with respect to $w$, define $G_k(w)$ for $k\ge1$ recursively by 
\begin{equation}\label{eq:Gk} G_k(w)=\dfrac{1}{1+z-\dfrac{z}{1+z[k]_z-wz[k]_{wz}}-z G_{k+1}(w)}. \end{equation}
Then $G_1(w)=C_{\sval,\svwsum}(1,w,z)$, and $G_k(1)=C_{\sval,\svwsum}(1,1,z)=C(z)$ for all $k\ge1$.
Differentiating Equation~\eqref{eq:Gk} with respect to $w$ and evaluating at $w=1$, we get
$$G'_k(1)=\frac{zG'_{k+1}(1)+\dfrac{z^2\left([k]_z-kz^k\right)}{1-z}}{(1-z
G_{k+1}(1))^2}=(C(z)-1)\left(G'_{k+1}(1)+\frac{z\left([k]_z-kz^k\right)}{1-z}\right).$$
Iterating this equality, we get
\begin{multline*}
\left.\frac{\partial}{\partial w} C_{\sval,\svwsum}(1,w,z) \right|_{w=1}=G'_1(1)=\frac{z}{1-z}\sum_{i\ge1}(C(z)-1)^i ([i]_z-iz^i)\\
=\frac{z(C(z)-1)}{1-z}\left(\frac{1}{(1-z)(2-C(z))}-\frac{z}{(1-z)(1-z(C(z)-1))}-\frac{z}{(1-z(C(z)-1))^2}\right),
\end{multline*}
which simplifies to the stated expression.
\end{proof}

The coefficients of the generating function in Corollary~\ref{cor:totalsvwsum} appear as sequence A114515 in~\cite{sloane_-line_nodate}, whose $n$th term counts the total number of peaks in all hill-free (that is, without peaks at height one) Dyck paths of semilength $n$. 
\begin{problem} Find a combinatorial proof of the fact that the total sum of the weights of symmetric valleys in all paths in $\D_n$ equals 
the total number of peaks in all hill-free Dyck paths of semilength~$n$. 
\end{problem}

It would also be interesting to find a common refinement of Theorems~\ref{thm:speaapea} and~\ref{thm:sval}.

\begin{problem} 
Find a generating function for Dyck paths with respect to the number of symmetric peaks and the number of symmetric valleys.
\end{problem}

\section{Increasing and unimodal peak or valley heights}\label{sec:unimodal}

In this section we enumerate Dyck paths whose sequence of peak or valley heights satisfies some monotonicity or unimodality condition. Some of these paths have been studied in the literature: paths whose sequence of valley heights is weakly increasing were introduced by Barcucci et al.~\cite{barcucci_nondecreasing_1997} under the name {\em non-decreasing} Dyck paths, paths whose sequence of peak heights is strictly increasing were counted by Sun and Jia~\cite{sun_counting_2007}, and an expression for paths whose sequence of peak heights is weakly increasing was given by Penaud and Roques~\cite{penaud_generation_2002}.
Here we use a uniform framework to recover these results and to obtain new ones by allowing unimodal sequences of peak or valley heights.

Let $\W^{<}$ (resp.\ $\W^{\le}$) be the set of Dyck paths whose valley heights strictly (resp.\ weakly) increase from left to right. Let $\W^{<>}$ (resp.\ $\W^{\le\ge}$) be the set of Dyck paths whose valley heights are strictly (resp.\ weakly) unimodal. Recall that a sequence $a_1,\dots,a_k$ is strictly (resp.\ weakly) unimodal if there is some $j$ with $1\le j\le k$ such that $a_1<a_2<\dots<a_j>a_{j+1}>\dots>a_k$ (resp.\ $a_1\le a_2\le \dots\le a_j \ge a_{j+1}\ge \dots \ge a_k$).
Define the sets $\M^{<}$, $\M^{\le}$, $\M^{<>}$ and $\M^{\le\ge}$ analogously, with the restrictions placed on the sequence of peak heights instead of the sequence of valley heights.

Denote by $W^{<}(z)$ (resp.\ $M^{<}(z)$) the generating function for paths in $\W^{<}$ (resp.\ $\M^{<}$) with respect to semilength, and similarly for the other sets. In this section we will give expressions for these eight generating functions. 

Let $\Lambda\subset\D$ denote the set of all pyramids, along with the empty path, that is, the set of paths of the form $\u^n\d^n$ for some $n\ge0$. The generating function paths in $\Lambda$ with respect to semilength is $\frac{1}{1-z}$. Note that $\Lambda$ is contained in each of the above eight sets of paths with restricted peak or valley heights.

\subsection{Restrictions on valley heights}\label{sec:valley_heights}

Using the first-return decomposition of Dyck paths, we see that the generating function for Dyck paths with strictly increasing valley heights satisfies
\begin{equation}\label{eq:Av<} W^{<}(z)=1+\frac{z}{1-z}W^{<}(z).\end{equation}
Indeed, any non-empty $D\in\W^{<}$ can be written as $D=P\u D'\d$, where $P\in\Lambda$, and $D'\in\W^{<}$. 

Let us give an alternative derivation of $W^{<}(z)$ that will extend more easily to other cases. Every path $D\in\W^{<}\setminus\Lambda$ must have a valley. Letting $a$ be the height of its highest valley, $D$ can be decomposed uniquely as 
\begin{equation}\label{eq:Av<2decomp}D=P_0\u P_1 \u \dots \u P_{a-1} \u \u P_a \d\u P'_{a} \d^{a+1},\end{equation} 
where $P_i,P'_a\in\Lambda$ for all $i$, and exponentiation denotes repetition. It follows that
\begin{equation}\label{eq:Av<2} W^{<}(z)=\frac{1}{1-z}+ \sum_{a\ge0}\frac{z^{a+2}}{(1-z)^{a+2}}.\end{equation}

Using either of the equations~\eqref{eq:Av<} or~\eqref{eq:Av<2}, we deduce that
$$W^{<}(z)=\frac{1-z}{1-2z},$$
which is also the generating function for compositions. A direct bijection between paths in $\W^{<}$ of semilength $n$ and compositions of $n$ can be described as follows. Let $D\in\W^{<}$. If $D\in\Lambda$, map it to the composition with one part equal to $|D|$. Otherwise, decompose $D$ as in~\eqref{eq:Av<2decomp}, and map it to the composition with $a+2$ parts of sizes $|P_0|+1, |P_1|+1, \dots, |P_{a}|+1,|P'_{a}|+1$.

Similarly, noting that every $D\in\W^{\le}\setminus\Lambda$ whose highest valley is at height $a$ can be decomposed uniquely as 
$$D=S_0\u S_1 \u \dots \u S_{a-1} \u S_a \u P_a \d\u P'_{a} \d^{a+1},$$
where now each $S_i$ is a (possibly empty) sequence of non-empty pyramids, 
and $P_a,P'_{a}\in\Lambda$ as before. It follows that
$$W^{\le}(z)=\frac{1}{1-z}+ \sum_{a\ge0}\frac{z^{a+2}}{(1-z)^2\left(1-\frac{z}{1-z}\right)^{a+1}}=\frac{1-2z}{1-3z+z^2},$$
recovering the generating function for non-decreasing Dyck paths obtained by Barcucci et al.~\cite{barcucci_nondecreasing_1997}.

Adding a variable $t$ to keep track of the number of pairs of consecutive valleys at the same height, the generating function becomes 
$$\sum_{D\in \W^{\le}}t^{\spea'(D)}z^{|D|}= \frac{1}{1-z}+ \sum_{a\ge0}\frac{z^{a+2}}{(1-z)^2\left(1-\frac{tz}{1-z}\right)}
\left(1+\frac{\frac{z}{1-z}}{1-\frac{tz}{1-z}}\right)^{a}=\frac{1-(1+t)z}{1-(2+t)z+tz^2}.$$

For paths whose sequence of valley heights is strictly unimodal, note that every $D\in\W^{<>}\setminus\Lambda$ whose highest valley is at height $a$ can be decomposed uniquely as
$$D=P_0\u P_1 \u \dots \u P_{a-1} \u \u P_a \d\u P'_{a} \d\d P'_{a-1} \d P'_{a-2} \d \dots \d P'_{0},$$
where $P_i,P'_i\in\Lambda$ for all $i$. It follows that
$$W^{<>}(z)=\frac{1}{1-z}+ \sum_{a\ge0}\frac{z^{a+2}}{(1-z)^{2a+2}}=\frac{1-3z+2z^2-z^3}{(1-z)(1-3z+z^2)}.$$
Interestingly, the generating function for paths in $\W^{\le}$ having at most one valley at height $0$ yields the same expression:
$$1+\frac{z}{1-z}W^{\le}(z)=\frac{1-3z+2z^2-z^3}{(1-z)(1-3z+z^2)}.$$ 
The coefficients of this generating function appear as sequence A055588 in~\cite{sloane_-line_nodate}, where an equivalent interpretation in terms of directed column-convex polyominoes is given.

Let us now enumerate paths whose sequence of valley heights is weakly unimodal. Again, every $D\in\W^{\le\ge}\setminus\Lambda$ whose highest valley is at height $a$ can be decomposed uniquely as
$$D=S_0\u S_1 \u \dots \u S_{a-1} \u S_a \u P_a \d\u P'_{a} \d\d S'_{a-1} \d S'_{a-2} \d \dots \d S'_{0},$$
where each $S_i$ and each $S'_i$ is a (possibly empty) sequence of non-empty pyramids, and $P_a,P'_{a}\in\Lambda$.
It follows that
$$W^{\le\ge}(z)=\frac{1}{1-z}+ \sum_{a\ge0}\frac{z^{a+2}}{(1-z)^2\left(1-\frac{z}{1-z}\right)^{2a+1}}=\frac{1-4z+3z^2}{1-5z+6z^2-z^3},$$
which, somewhat surprisingly, coincides with the generating function for Dyck paths of height at most $5$. Its coefficients appear as
A080937 in~\cite{sloane_-line_nodate}. 

Adding a variable $t$ to keep track of the number of pairs of consecutive valleys at the same height, the generating function becomes 
\begin{align*}
\sum_{D\in \W^{\le\ge}}t^{\spea'(D)}z^{|D|}&= 
\frac{1}{1-z}+ \sum_{a\ge0}\frac{z^{a+2}}{(1-z)^2\left(1-\frac{tz}{1-z}\right)}
\left(1+\frac{\frac{z}{1-z}}{1-\frac{tz}{1-z}}\right)^{2a}\\
&=\frac{1-(3+2t)z+(2+4t+t^2)z^2-(1+t+t^2)z^3}{(1-z)\left(1-(2t+3)z+(1+4t+t^2)z^2-t^2 z^3\right)}.
\end{align*}

\subsection{Restrictions on peak heights}\label{sec:peak_heights}

For $i\ge1$, let $\V_i=\{\d^j\u^j:0\le j\le i\}$, that is, the set of pits of semilength at most $i$. 

We start by giving an expression for the generating function $M^{<}(z)$ for Dyck paths with strictly increasing peak heights. 
Every non-empty path $D\in\M^{<}$ whose highest peak is at height $a$ can be decomposed uniquely as
\begin{equation}\label{eq:Ap<decomp}D=\u P_1 \u P_2 \u \dots \u P_{a-1} \u \d^{a},\end{equation} 
where $P_i\in\V_i$ for all $i$. Since the contribution of subpaths in $\V_i$ to the generating function is $1+z+\dots+z^i=[i+1]_z$, it follows that
$$M^{<}(z)=1+\sum_{a\ge1}z^a[2]_z[3]_z\cdots[a]_z=\sum_{a\ge0}z^a[a]_z!,$$
where we use the notation $[a]_z!=[a]_z[a-1]_z\cdots[1]_z$. This expression was found by Sun and Jia~\cite{sun_counting_2007}. The coefficients of its series expansion appear as sequence A008930 in~\cite{sloane_-line_nodate}, whose $n$th term also counts compositions of $n$ whose $i$th part is at most $i$ for all $i$.
A bijection, apparently due to Callan (see the notes in \cite[A008930]{sloane_-line_nodate}),
 from paths $D\in\M^{<}$ of semilength $n$ to such compositions is obtained by letting the first part of the composition be $1$ (assuming $n\ge1$), decomposing $D$ according to~\eqref{eq:Ap<decomp}, and letting the $i$th part of the composition be equal to $|P_{i-1}|+1$ for $2\le i\le a$.

A similar argument yields the  generating function $M^{\le}(z)$ for Dyck paths with weakly increasing peak heights. Every non-empty path  $D\in\M^{\le}$ whose highest peak is at height $a$ can be decomposed as in Equation~\eqref{eq:Ap<decomp}, except that now each $P_i$ consists of a (possibly empty) sequence of non-empty pits from $\V_i$, for all $i$. Since the contribution of such a sequence to the generating function is 
$$\frac{1}{1-(z+z^2+\dots+z^i)}=\frac{1}{1-z[i]_z},$$ we get
$$M^{\le}(z)=\sum_{a\ge0}z^a\prod_{i=1}^{a}\frac{1}{1-z[i]_z}=\sum_{a\ge0}\frac{z^a(1-z)^a}{\prod_{i=1}^{a}(1-2z+z^{i+1})}.$$ 
A different alternating-sum expression for this generating function has been given by Penaud and Roques in~\cite[Theorem 1]{penaud_generation_2002}.
The coefficients of the series expansion of $M^{\le}(z)$ appear as sequence A048285 in~\cite{sloane_-line_nodate}.

Adding a variable $s$ to keep track of the number of pairs of consecutive peaks at the same height (equivalently, symmetric valleys), the generating function becomes 
\begin{align*}
\sum_{D\in \M^{\le}}s^{\sval(D)}z^{|D|}&= 1+\sum_{a\ge1}\frac{z^a}{1-sz[a]_z}\prod_{i=1}^{a-1}\frac{1+(1-s)z[i]_z}{1-sz[i]_z}\\
&=1+\sum_{a\ge1}\frac{z^a(1-z)\prod_{i=1}^{a-1}(1-sz+(s-1)z^{i+1})}{\prod_{i=1}^{a}(1-(s+1)z+sz^{i+1})}.
\end{align*}

For Dyck paths whose peak heights are strictly unimodal, note that every non-empty path $D\in\M^{<>}$ decomposes uniquely as 
$$D=\u P_1 \u P_2 \u \dots \u P_{a-1} \u \d P'_{a-1} \d P'_{a-2} \d \dots \d P'_{1} \d,$$
where $P_i,P'_i\in\V_i$ for all $i$. Thus, we get
$$M^{<>}(z)=\sum_{a\ge0}z^a[a]_z!^2.$$
The first coefficients of its series expansion are $1,1,1,3,6,15,38,95,243,627,1622, 4208$, which dot match any existing sequence in~\cite{sloane_-line_nodate}.

Finally, for Dyck paths with weakly unimodal peak heights, note that every non-empty path $D\in\M^{\le\ge}$ decomposes uniquely as 
$$D=\u S_1 \u S_2 \u \dots \u S_{a-1} \u S_a \d S'_{a-1} \d S'_{a-2} \d \dots \d S'_{1} \d,$$
where each $S_i$ and each $S'_i$ consists of a (possibly empty) sequence of non-empty pits from $\V_i$, for all $i$. It follows that 
$$M^{\le\ge}(z)=1+\sum_{a\ge1}\frac{z^a}{1-z[a]_z}\prod_{i=1}^{a-1}\left(\frac{1}{1-z[i]_z}\right)^2=1+\sum_{a\ge1}\frac{z^a(1-z)^{2a-1}}{\prod_{i=1}^{a-1}(1-2z+z^{i+1})^2}.$$ The first coefficients of its series expansion are $1,1,2,5,14,41,124,383,1200,3796,12088, 38676$, which do not match any existing sequence in~\cite{sloane_-line_nodate}.

Adding a variable $s$ to keep track of the number of pairs of consecutive peaks at the same height, the generating function becomes 
\begin{align*}
\sum_{D\in \M^{\le\ge}}s^{\sval(D)}z^{|D|}&= 1+\sum_{a\ge1}\frac{z^a}{1-sz[a]_z}\left(\prod_{i=1}^{a-1}\frac{1+(1-s)z[i]_z}{1-sz[i]_z}\right)^2\\
&=1+\sum_{a\ge1}\frac{z^a(1-z)}{1-(s+1)z+sz^{a+1}}\left(\prod_{i=1}^{a-1}\frac{1-sz+(s-1)z^{i+1}}{1-(s+1)z+sz^{i+1}}\right)^2.
\end{align*}

\subsection{A connection with column-convex polyominoes}

There are several bijections in the literature between Dyck paths (or subsets of them) and certain classes of column-convex polyominoes. 
Recall that a connected polyomino is {\em column-convex} (resp.\ {\em row-convex}) 
if every column (resp.\ row) 
consists of contiguous cells, and it is {\em convex} if it is both column-convex and row-convex.
A {\em parallelogram polyomino} is a polyomino whose intersection with every line of slope $-1$ is connected. 
A polyomino is {\em directed} if it can be built by starting with a single cell and adding new cells immediately to the right or above an existing cell. 

Delest and Viennot~\cite[Prop.\ 4.1]{delest_algebraic_1984} describe a bijection from $\D_n$ to parallelogram polyominoes 
with semiperimeter $n+1$. Given $D\in\D_n$ with peak heights $a_1,a_2,\dots,a_k$ and valley heights $b_1,b_2,\dots,b_{k-1}$ (for some $k\ge1$), its image is the polyomino consisting of $k$ columns with $a_1,a_2,\dots,a_k$ cells, from left to right, such that the top $b_i+1$ cells of column $i$ are aligned with the bottom $b_i+1$ cells of column $i+1$, for $1\le i\le k$. See Figure~\ref{fig:bij} for an example.
In particular, the height of the $i$th peak of the path equals the number of cells in the $i$th column of the polyomino. Thus, under this bijection, pairs of consecutive peaks at the same height (equivalently, symmetric valleys) correspond to pairs of adjacent columns having the same number of cells. This allows us to interpret our Theorem~\ref{thm:sval} as enumerating parallelogram polyominoes with respect to the number of adjacent columns of the same size. Similarly, the formulas in Section~\ref{sec:peak_heights} can be interpreted as counting parallelogram polyominoes with certain conditions on the column sizes.

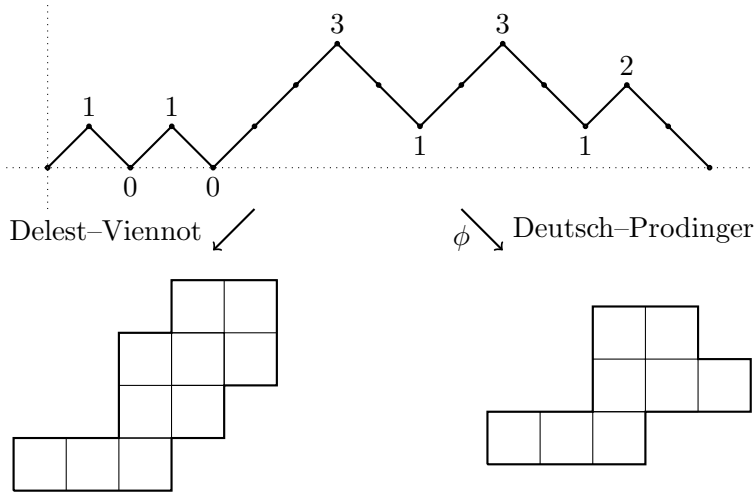
\begin{figure}[htb]
\centering
    \begin{tikzpicture}[scale=0.55]
     \draw[dotted](-1,0)--(17,0);
     \draw[dotted](0,-1)--(0,4);
     \draw[thick](0,0) circle(1.2pt)\up\down\up\down\up\up\up\down\down\up\up\down\down\up\down\down;
     \draw (1,1) node[above]{$1$};
     \draw (3,1) node[above]{$1$};
     \draw (7,3) node[above]{$3$};
     \draw (11,3) node[above]{$3$};
     \draw (14,2) node[above]{$2$};
     \draw (2,0) node[below]{$0$};
     \draw (4,0) node[below]{$0$};
     \draw (9,1) node[below]{$1$};
     \draw (13,1) node[below]{$1$};     
     \draw[thick,->] (5,-1)--(4,-2);
     \draw (4,-1.5) node[left] {Delest--Viennot};
      \draw[thick,->] (10,-1)--(11,-2);
      \draw (11,-1.5) node[right] {Deutsch--Prodinger};
        \draw (10,-1.7) node {$\bij$};
    \end{tikzpicture}\medskip
    
    \begin{tikzpicture}[scale=0.7]
    \draw (0,0) grid (3,1);
     \draw (2,1) grid (4,3);
     \draw (3,2) grid (5,4);
     \draw[thick](0,0)--(3,0)--(3,1)--(4,1)--(4,2)--(5,2)--(5,4)--(3,4)--(3,3)--(2,3)--(2,1)--(0,1)--(0,0);
\begin{scope}[shift={(9,.5)}]
    \draw (0,0) grid (3,1);
     \draw (2,1) grid (4,3);
     \draw[thick](0,0)--(3,0)--(3,1)--(5,1)--(5,2)--(4,2)--(4,3)--(2,3)--(2,1)--(0,1)--(0,0);
\end{scope}
    \end{tikzpicture}
   \caption{The image of a Dyck path  by the Delest--Viennot and the Deustch--Prodinger maps.}\label{fig:bij}
\end{figure}

Deutsch and Prodinger~\cite{deutsch_bijection_2003} describe another bijection between non-decreasing Dyck paths (that is, those whose valley heights are weakly increasing) of semilength $n$ and directed column-convex polyominoes of area $n$. 
Given $D\in\D_n$ with peak heights $a_1,a_2,\dots,a_k$ and valley heights $b_1\le b_2\le \dots\le b_{k-1}$, set $b_0=0$, and define $\bij(D)$ to be the polyomino having $k$ columns, where the $i$th column extends from  $y=b_{i-1}$ to $y=a_i$, for $1\le i\le k$. See Figure~\ref{fig:bij} for an example.
Under this bijection, peak heights of $D$ become the $y$-coordinates of the tops of the columns of $\bij(D)$, whereas valley heights become the $y$-coordinates of the bottoms of the columns. This allows us to combine some of the restrictions from Sections~\ref{sec:valley_heights} and~\ref{sec:peak_heights} on valley heights and peak heights of Dyck paths, and translate them into restrictions on the column boundaries of directed column-convex polyominoes. In some cases, 
the resulting objects are well-studied classes of polyominoes. Here are some examples:

\begin{enumerate}[(a)]
\item When restricted to paths in $\D_n$ with weakly increasing valley heights and weakly increasing peak heights, $\bij$ gives a bijection to parallelogram polyominoes of area $n$. Compare this to the image of Delest and Viennot's bijection, which consists of parallelogram polyominoes with a fixed semiperimeter.
The enumeration of parallelogram polyominoes with respect to their area dates back to work of Klarner and Rivest~\cite{klarner_asymptotic_1974}, see also~\cite[Ex.\ IX.14]{flajolet_combinatorial_1980}. The resulting sequence is recorded as A006958 in~\cite{sloane_-line_nodate}.

\item Restricting $\bij$ to paths in $\D_n$ with weakly increasing valley heights and weakly decreasing peak heights, and then reading the row lengths of the resulting polyomino from top to bottom, we obtain a bijection to weakly unimodal compositions of $n$. Such compositions are enumerated in \cite[Sec.\ 2.5]{stanley_enumerative_2012}; see also~\cite[Ex.\ I.8]{flajolet_combinatorial_1980}, where they are called stack polyominoes. The resulting sequence is A001523 in~\cite{sloane_-line_nodate}.

\item Restricting $\bij$ to paths in $\D_n$ with strictly increasing valley heights and strictly decreasing peak heights, then reading the row lengths of the resulting polyomino from top to bottom, and appending a $1$ at the end, we get a bijection to weakly unimodal compositions of $n+1$ whose adjacent parts differ by at most 1, and whose first and last parts equal 1. These can be enumerated using similar techniques as in case (b); see sequence A001522 in~\cite{sloane_-line_nodate}.

\item Similarly, restricting $\bij$ to paths in $\D_n$ with weakly increasing valley heights and strictly decreasing peak heights, and then reading the row lengths of the resulting polyomino from top to bottom, we obtain weakly unimodal compositions of $n$ where the first part equals one and the adjacent parts in the increasing piece differ by at most 1, which again can be enumerated similarly; see
sequence A001524 in~\cite{sloane_-line_nodate}.

\item When restricted to paths in $\D_n$ with weakly increasing valley heights and weakly unimodal peak heights, $\bij$ gives a bijection to directed convex polyominoes of area $n$. These were first enumerated by Bousquet-M\'elou and Viennot~\cite{bousquet-melou_empilements_1992}, see also~\cite{bousquet-melou_generating_1995} for some history on the enumeration of different classes of convex polyominoes. The resulting sequence is A067676 in~\cite{sloane_-line_nodate}.

\end{enumerate}

\subsection*{Acknowledgments} The author thanks Emeric Deutsch for proposing 
the study of the number of pairs of consecutive peaks (and valleys) at the same height on Dyck paths,
which was in turn inspired by a question of David Scambler in an online forum, and for contributing several ideas.

\bibliographystyle{plain}
\bibliography{peaks_dyck_paths}
\end{document}